\newcommand{\citep}[1]{\cite{#1}}
\newtheorem{definition}{Definition}
\newtheorem{assumption}{Assumption}
\newtheorem{theorem}{Theorem}
\newtheorem{lemma}{Lemma}
\newcommand{\norm}[1]{\|{#1}\|}
\newcommand{\trans}{^{\top}}
\newcommand{\cXs}{\mathcal{X}_{\text{stuck}}}
\title{Escaping Saddle Points for Zeroth-order Nonconvex Optimization using Estimated Gradient Descent}
\author{ Qinbo Bai, Mridul Agarwal,  and Vaneet Aggarwal \thanks{The authors are with Purdue University, West Lafayette IN 47907, USA, email:\{bai113,agarw180,vaneet\}@purdue.edu}}
\begin{document}

%

%
\if 0
\twocolumn[

\aistatstitle{Escaping Saddle Points for Zeroth-order Nonconvex Optimization using Estimated Gradient Descent}

\aistatsauthor{ Qinbo Bai, Mridul Agarwal,  and Vaneet Aggarwal}

\aistatsaddress{Purdue University} ]
\fi
\maketitle
\begin{abstract}
Gradient descent and its variants are widely used in machine learning. However, oracle access of gradient may not be available in many applications, limiting the direct use of gradient descent. This paper proposes a method of estimating gradient to perform gradient descent, that converges to a stationary point for general non-convex optimization problems. Beyond the first-order stationary properties, the second-order stationary properties are important in machine learning applications to achieve better performance. We show that the proposed model-free non-convex optimization algorithm returns an $\epsilon$-second-order stationary point with $\widetilde{O}(\frac{d^{2+\frac{\theta}{2}}}{\epsilon^{8+\theta}})$ queries of the function for any arbitrary $\theta>0$.
\end{abstract}

\section{Introduction}

Gradient descent and its variants (e.g., Stochastic Gradient Descent) are widely used in machine learning due to their favorable computational
properties, for example, in optimizing weights of a deep neural network. Given a function $f$: $\mathbb{R}^d\rightarrow\mathbb{R}$, the gradient descent (GD) algorithm updates $\bm{x}_t$ in each iteration as 
\begin{equation}
\bm{x}_{t+1}=\bm{x}_t-\eta\nabla f(\bm{x}_t),
\end{equation}
where $\eta>0$ represent the step size. This algorithm can be shown to achieve $\epsilon$-first-order stationary point for non-convex optimization problem in $O(\frac{1}{\epsilon^2})$ iterations \citep{nesterov1998introductory}. Recently, second order stationary guarantees have been studied by using a perturbed version of gradient descent \citep{jin2017escape}.  However, in many cases, gradient of function may not be accessible and only function value can be queried. This paper studies an algorithm which uses an estimate of the gradient to perform gradient descent, and shows that the algorithm achieves an $\epsilon$-second order stationary point. 

In non-convex settings, convergence to a first-order stationary points is not satisfactory since this point can be a global minima, a local minima, a saddle point, or even a local maxima. Even though finding global minima can be hard, recent results show that, in many problems of interest, all local minima are global minima (e.g., in matrix and tensor completion \citep{jain2013low,liu2016low}, dictionary learning \citep{sun2016complete}, and certain classes of deep neural networks \citep{kawaguchi2016deep}). Saddle points (and local maxima) can correspond to highly suboptimal solutions in many problems \citep{dauphin2014identifying}, where the authors argue that the saddle points are ubiquitous in high-dimensional, non-convex optimization problems, and are thus the main bottleneck in training neural networks. Standard analysis of gradient descent only considers first-order stationary guarantees which do not rule out the saddle points. 

Using stable manifold theorem, authors of \citep{lee2016gradient} prove that gradient descent can indeed escape when the saddle point when initial point is not on the stable manifold. However, they do no provide any complexity analysis for the steps to escape the saddle points. Recently, there has been results based on perturbation of gradient descent to achieve the second-order stationary point \citep{jin2017escape}. However, what happens if the gradient is  not known, which can happen when the function is complex (or available as a black-box) to find the gradient. In this scenario, one approach is to estimate the gradient and perform a gradient descent algorithm. This motivates the question: {\em Can estimated gradient descent escape saddle points and converge to local minima? }

This paper answers this question in positive. We note that this is the first work on the guarantees of gradient-descent based algorithm for zeroth-order non-convex optimization, where only the function can be queried, while the gradient information is not available. Recently, the authors of \citep{balasubramanian2019zeroth} considered the problem of zeroth-order non-convex optimization, while they use cubic regularization based Newton's method. In contrast, we investigate use of regular gradient descent algorithm with the estimation of the gradients.

In this work, without any estimate of the Hessian, {we use the Gaussian smoothening method combined with concentration inequality to give the minimal number of samples we need to estimate the gradient with error at most $\hat{\epsilon}$}. Bounding the error in gradient estimation, we prove that a $\epsilon$-second-order stationary point can be reached with complexity of $O\big(\frac{l(f(\bm{x}_0)-f^*(\bm{x}))}{\epsilon^2}\big)$ iterations following the idea in \citep{jin2017escape}. However, since each iteration queries function multiple times to obtain an estimate of the gradient, the overall complexity is $\widetilde{O}(\frac{d^{2+\frac{\theta}{2}}}{\epsilon^{8+\theta}})$ where $\theta$ is arbitrary positive number. 
The key idea is to use the geometry around saddle points such that the stuck region from which the gradient descent can't escape is a thin band. This means that the small error in the estimation of the gradient in each iteration can lead to escaping this stuck region if the point is not an $\epsilon$-second-order stationary point. Further, the function calls within each iteration can be paralleled decreasing the run-time of the algorithm. 


\section{Related Work}

In recent years, multiple algorithms have been investigated for non-convex optimization problems that converge to $\epsilon$-second-order stationary point. Most of the work has been done for model-based approaches which assume the knowledge of gradient and/or the Hessian of the objective function. Recently, there has also been some work in model-free approaches for non-convex optimization. 

{\bf Model-Based Non-Convex Optimization: } Model-based approaches typically assume the knowledge of derivatives (first or higher order) of the function. We summarize key proposed algorithms on these directions that have been shown to achieve $\epsilon$-second-order stationary point convergence guarantees. 

Based on the knowledge of gradients, the authors of \citep{jin2017escape,jinsept2019} show that the perturbation of gradients in each iteration of the gradient descent can lead to $\epsilon$-second-order stationary point guarantees in $\widetilde{O}(\epsilon^{-2})$ iterations, thus providing no additional loss in the number of iterations required for first order stationary point guarantees. Perturbed versions of stochastic gradient descent have also been studied in \citep{jinsept2019}, where the algorithm finds $\epsilon$-second-order stationary point in ${\tilde O}(\epsilon^{-4})$ iterations if the stochastic gradients are Lipschitz, and ${\tilde O}(d\epsilon^{-4})$ iterations if the stochastic gradients are not Lipschitz.

If the Hessian is also known, one of the approach is to use a successive convex approximation (SCA) method. Perturbation in each iteration of SCA has been shown to achieve $\epsilon$-second-order stationary point in \citep{bedi2019escaping}. Another approach is to add a cubic regularization of Newton method in the iterations \citep{nesterov2006cubic}, where the authors showed that the algorithm can converge to an $\epsilon$-second-order stationary point within $O(\frac{1}{\epsilon^{1.5}})$ gradient and Hessian oracle calls. Recently, stochastic variants of this algorithm have been studied, and have been shown to improve the iterations as compared to stochastic gradient descent \citep{tripuraneni2018stochastic}. Instead of directly querying the Hessian information, recent research shows that one can achieve an $\epsilon$-second-order stationary point using Hessian-vector-product \citep{agarwal2017finding}.

In contrast to these works, we consider a model-free approach, where there is no oracle available to query gradients and/or Hessian. Thus, an estimation of gradient is used to perform gradient descent algorithm. 


{\bf Model-Free Non-Convex Optimization: } A model-free approach to non-convex optimization, also called zeroth-order non-convex optimization assumes that there is an oracle for querying the function. However, anything else about the function (e.g., gradients) is not available. Model-free approaches
for optimization problems estimate the values of gradients and/or Hessians, and are not well understood from a theoretical perspective. Such problems have applications in model-free reinforcement learning \citep{salimans2017evolution} where the objective is not available in closed form and can only be queried. An approach for estimation of gradient has been studied in \citep{conn2009introduction,nesterov2017random}. However, the existing works either find the guarantees for convex optimization or first-order stationary point guarantees. Recently, the authors of \citep{balasubramanian2019zeroth} provided the first model-free algorithm for non-convex optimization with second order guarantees. They use the cubic regularizer of the Newton's method after estimating gradient and Hessian. In contrast, we only estimate the gradient to compute the estimated gradient descent. We also note that the algorithm in  \citep{balasubramanian2019zeroth} requires $O(\frac{d}{\epsilon^{3.5}})+\widetilde{O}(\frac{d^8}{\epsilon^{2.5}})$ function calls while we require $\widetilde{O}(\frac{d^{2+\frac{\theta}{2}}}{\epsilon^{8+\theta}})\approx\widetilde{O}(\frac{d^2}{\epsilon^8})$ function calls to achieve $\epsilon$-second-order stationary point. Thus, our result outperforms that in \citep{balasubramanian2019zeroth} when $d=\Omega(\epsilon^{-(11/12+\delta)})$ for arbitrarily small $\delta>0$.


\if 0
\begin{table*}[h]
	\caption{Sample Table Title} \label{sample-table}
	\begin{center}
		\begin{tabular}{llll}
			\textbf{Algorithm}  &\textbf{Complexity} &\textbf{Oracle} &\textbf{Stochastic}\\
			\hline \\
			\citep{?}         &$O(\epsilon^{-1.5})$   &Hessian  &No\\
			\citep{?}         &$O(\epsilon^{-1.5})$  &Hessian  &No\\
			\hline \\
			\citep{?}         &$\widetilde{O}(\epsilon^{-2})$   &Hessian-vector-product &No\\
			\citep{?}         &$\widetilde{O}(\epsilon^{-1.75})$  &Hessian-vector-product &No\\
			\citep{?}         &$\widetilde{O}(\epsilon^{-3.5})$       &Hessian-vector-product &Yes\\
			\citep{?}         &$\widetilde{O}(\epsilon^{-3.5})$       &Hessian-vector-product &Yes\\
			\hline \\
			\citep{?}         &$O(ploy(d)\epsilon^{-4})$   &Gradient &Yes\\
			\citep{?}         &$\widetilde{O}(\epsilon^{-2})$  &Gradient &No\\
			\citep{?}         &$O(d\epsilon^{-4})$        &Gradient &Yes\\
			\hline \\
			\citep{?}         &$O(\frac{d}{\epsilon^{-3.5}})+\widetilde{O}(\frac{d^8}{\epsilon^{-2.5}})$   &Function value &Yes\\
			\textbf{This work}       &$\widetilde{O}(\frac{d^2}{\epsilon^8})$   &Function value &No\\
		\end{tabular}
	\end{center}
\end{table*}

\noindent\textbf{Hessian-based:} Intuitively, by computing the Hessian matrix of the object function, one can directly know whether the first-order stationary point is a local minimum so that the gradient descant algorithm can converge to the second order stationary point. One of the famous result is given by Nesterov and Polyak \citep{?}. By designing a cubic regularization of Newton method for non-convex problems, they prove that the algorithm can converge to an $\epsilon$-second-order stationary point within $O(\frac{1}{\epsilon^{1.5}})$ gradient and Hessian oracle calls to the entire function. Later the adaptive cubic regularisation (ARC) method is proposed \citep{?} and achieve the same grantee. ARC approach is more general
for the reason that it allows the cubic model to be solved only approximately and may employ approximate Hessians. However, these algorithms need to compute Hessian information each iteration and it is expensive in high dimensional condition.

\noindent\textbf{Hessian-vector-product-based:} 
Instead of directly querying the Hessian information, recent research shows that one can achieve an $\epsilon$-second-order stationary point only use Hessian-vector-product. Given a function $f$, a point $\bm{x}$ and a direction $\bm{v}$, the Hessian-vector-product is $\nabla^2 f(\bm{x}) \cdot \bm{v}$. Carmon and Duchi \citep{?} gave their algorithm to find an $\epsilon$-second-order stationary point within $\widetilde{O}(\epsilon^{-2})$. Later, Carmon and Agarwal \citep{?} proposed the accelerated algorithm so that the complexity is reduced to $\widetilde{O}(\epsilon^{-1.75})$. Besides, when it comes to the stochastic gradient and stochastic Hessian. Xu and Yang \citep{?} showed that algorithms can be reduced to  an $\widetilde{O}(\epsilon^{-3.5})$ using the gradient evaluation and Hessian-vector-products approximation. Recently, Nilesh et al. \citep{?} achieved the same convergence rate by only using the stochastic gradient and stochastic Hessian-vector-product without any delicate acceleration and variance reduction techniques.

\fi


\section{Problem Formulation and Assumptions}
In this section, we will  introduce the notations used in this paper,   describe some  definitions that will be used in this paper, and define the problem formulation formally.

\subsection{Notations}
Bold upper-case letters $\mathbf{A}$ and bold lower-case letters $\bm{x}$ represent the matrices and vectors, respectively. $\bm{x}_i$ denotes the $i^{th}$ element of the vector $\bm{x}$.  $\Vert\cdot\Vert$ is the $l_2$-norm and spectral norm for vectors and matrices, respectively. We use $\lambda_{min}(\cdot)$ to denote the smallest eigenvalue of a matrix.

For a twice-differentiable function $f$: $\mathbb{R}^d\rightarrow\mathbb{R}$, $\nabla f(\cdot)$ and $\nabla^2 f(\cdot)$ are denoted to be the gradient and Hessian of $f$. $f^{*}$ represents the global minimum of the function $f$. $h(n) = O(g(n))$ if and only if there exists a positive real number $M$ and a real number $n_0$ such that ${\displaystyle |h(n)|\leq \;Mg(n){\text{ for all }}n\geq n_{0}.}$ Further, $h(n) = {\widetilde{O}}(g(n))$ if $h(n) = O(g(n) \log^k (g(n)) )$ for any $k>0$. 

$\mathbb{B}^{(d)}_{\bm{x}}(r)$ represents the  ball in $d$ dimension with radius $r$ and center point $\bm{x}$ and we will use $\mathbb{B}_{\bm{x}}(r)$ to simplify the notation when it is clear. $\mathcal{P}_\chi(\cdot)$ is used to denote the projection to the subspace of $\chi$. The norm is assumed to be the Euclidean norm, unless mentioned otherwise. 



\subsection{Definitions }

In this sub-section, we will define a few properties of the function and the stationary point that will be used in the paper. 

\begin{definition} \label{smoothdef}
	A differentiable function $f(\cdot)$ is $l$-smooth if $\forall \bm{x},\bm{y},$
	$$\quad \Vert\nabla f(\bm{x})-\nabla f(\bm{y})\Vert\leq l\Vert\bm{x}-\bm{y}\Vert.$$
\end{definition}

$l$-smooth limits the  speed of increase of the function value. Using the property of $l$-smooth, it is well known that by selecting the stepsize $\eta=\frac{1}{l}$, the gradient descent algorithm will converge within the $O\big(\frac{l(f(\bm{x}_0)-f^*)}{\epsilon^2}\big)$ to the  $\epsilon$-first-order stationary point \citep{jinsept2019}, which is defined as follows. 

\begin{definition} Given a differentiable function $f(\cdot)$, $\bm{x}$ is a first-order stationary point if $\Vert\nabla f(\bm{x})\Vert=0$, and $\bm{x}$ is a $\epsilon$-first-order stationary point if $\Vert\nabla f(\bm{x})\Vert\leq\epsilon$.
\end{definition}

A first order stationary point can be either a local minimum, a local maximum, or a saddle point. In minimization problems, all local maxima and saddle points needs to be avoided. In this paper, we use ``saddle point" to refer to both of them and  are defined as follows:

\begin{definition} Given a differentiable function $f(\cdot)$, $\bm{x}$ is a local minimum if $\exists \epsilon>0$ and $\Vert\bm{y}-\bm{x}\Vert<\epsilon$, we have $f(\bm{x})<f(\bm{y})$. $\bm{x}$ is a ``saddle" point if $\nabla f(\bm{x})=0$ but $\bm{x}$ is not a local minimum. We also define a saddle point $\bm{x}$ to be a strict saddle point if $\lambda_{min}(\nabla^2f(\bm{x}))<0$, which means $\bm{x}$ is non-degenerate.
\end{definition}
	
In this definition, we simply use the word strict saddle point to avoid the degenerate condition where $\lambda_{min}(\nabla^2f(\bm{x}))=0$ and second-order information is not enough to decide the property of $\bm{x}$.

To aviod all strict saddle points in general non-convex problem, we define the $\rho$-Hessian Lipschitz to be as follows.

\begin{definition} \label{defnhessian}
	Given a twice differentiable function $f(\cdot)$, $f$ is $\rho$-Hessian Lipschitz if $\forall\bm{x},\bm{y}$,
	$$\quad \Vert\nabla^2f(\bm{x})-\nabla^2f(\bm{y})\Vert\leq\rho\Vert\bm{x}-\bm{y}\Vert.$$
\end{definition}

The $\rho$-Hessian Lipschitz limits the speed of function increase and also constrains the speed of Hessian matrix changing. Further, we give the definition of $\epsilon$-second-order stationary point, which is the key objective for the proposed algorithm.

\begin{definition}
	Given a $\rho$-Hessian Lipschitz function $f(\cdot)$, $\bm{x}$ is a second-order stationary point if $\Vert\nabla f(\bm{x})\Vert=0$ and $\lambda_{min}(\nabla^2f(\bm{x}))\geq0$. Further,  $\bm{x}$ is a $\epsilon$-second-order stationary point if
	$$\Vert\nabla f(\bm{x})\Vert\leq\epsilon, \quad\quad\lambda_{min}(\nabla^2f(\bm{x}))\geq-\sqrt{\rho\epsilon}.$$
\end{definition}

Finally, we give the definition of the distance between the estimated gradient and true gradient, which is used in the following sections.

\begin{definition}
	Given a differentiable function $f(\cdot)$ and a gradient estimator $\hat{\nabla}$, we say $\hat{\nabla}f(\bm{x})$ is $\hat{\epsilon}$-close to $\nabla f(\bm{x})$ for given point $\bm{x}$ and for some $\hat{\epsilon}>0$ if
	$$\Vert\hat{\nabla}f(\bm{x})-\nabla f(\bm{x})\Vert\leq\hat{\epsilon}.$$
\end{definition}

\subsection{Problem Formulation}

In this paper, we aim to propose an algorithm that is model-free and solves the non-convex optimization problem such that the converged solution is an $\epsilon$-second-order stationary point. We will use  an estimate of the gradient, and perform the gradient descent algorithm. Using this estimated gradient descent, the main aim of the paper is to find the number of iterations required for convergence, as well as the number of function queries needed to converge to an $\epsilon$-second-order stationary point. In order to show the convergence rate, we use the following assumption. 

\begin{assumption}\label{assum}
	Function $f$ is both $l$-smooth and $\rho$-Hessian Lipschitz, and $\Vert\nabla f(\bm{x})\Vert\leq B$ for some finite and positive $B$ for all $\mathbf{x}\in\mathbb{R}^d$.
\end{assumption}

\begin{assumption}\label{symmetric_hessian}
	Hessian matrix, $\nabla^2f(\mathbf{x})$ , of function $f$ is a symmetric matrix for all $\mathbf{x}\in\mathbb{R}^d$. 
\end{assumption}


Based on these assumptions, this paper studies an algorithm of estimating gradient and performing gradient descent based on such estimate, and finds the complexity to return an $\epsilon$-second-order stationary point without any oracle access to the gradient and the Hessian.

\section{Proposed Algorithm}

In this section, we will describe the approach used for estimating the gradient, and the estimated gradient descent algorithm that uses this estimate of the gradient. 

\subsection{Estimation of the Gradient}
In zeroth-order oracle, no gradient information is available. To use a gradient descent algorithm, we need to first estimate the gradient information by using the function value. In this subsection, we describe the graident estimation algorithm that is used in this paper. This Algorithm pseudo-code is given in Algorithm \ref{GE}.  

\begin{algorithm}[h]
	\label{GE}
	\caption{Gradient Estimation $GE({d,l,B,c',\hat{\epsilon},\bm{x}})$}
	\begin{algorithmic}[1]
		\State $v\leftarrow\frac{\hat{\epsilon}}{c'l(d+3)^{1.5}}$, $\sigma^2\leftarrow2c'^{2}(d+4)B^2$, $m\leftarrow\frac{32\sigma^2}{\hat{\epsilon}^2}(\log(\frac{1}{\hat{\epsilon}})+\frac{1}{4})$
		\State Generate $\bm{u}_1,...\bm{u}_m$, where $\bm{u}_i\sim\mathcal{N}(0,\mathbf{I}_d)$
		\State $\hat{\nabla}f(\bm{x})=\frac{1}{m}\sum_{i=1}^{m}\frac{f(\bm{x}+v\bm{u}_i)-f(\bm{x})}{v}\bm{u}_i$ 
		\State \Return $\hat{\nabla}f(\bm{x})$
	\end{algorithmic}
\end{algorithm}

The estimation of the gradient uses a Gaussian smoothing approach. Using Gaussian smooth method isn't a new idea. \citep{nesterov2017random} described this method systematically and used this method to give the guarantee for zero-order convex optimization. Despite \citep{balasubramanian2019zeroth} using the similar idea on zeroth-order non-convex optimization, to the best of our knowledge, there is no work that provides the total number of samples required for gradient estimation with error at most $\hat{\epsilon}$. In this paper, we use concentration inequality and conditional probability results to provide such a result which is described formally in Lemma \ref{lemma_GE}.

Recall that $d$ is the dimension of $\bm{x}$,  $l$ is $l$-smooth parameter in Definition \ref{smoothdef}, $B$ is our bound in Assumption \ref{assum} for gradient norm, $c'>1$ is a constant defined in Lemma \ref{lemma_GE}, $\bm{x}$ is the point we make an estimation and $\hat{\epsilon}$ is the intended gap between the estimated gradient and true gradient given in Definition 6. The line 1 in the algorithm gives the parameter used in the following lines. $v$ is the Gaussian smooth parameter. $\sigma^2$ is the bound for the variance of gradient estimator in one sample. $m$ gives the total number of samples we need to get error less than $\hat{\epsilon}$. Line 2 generates $m$ Gaussian random vector with zero mean and variance $\mathbf{I}_d$ which are used to calculate the estimate of the gradient. The estimation algorithm (Line 3) takes an average of estimated gradient using $m$ samples. The next result shows that with an appropriate choice of $m$ and $v$, the estimated gradient is within $\hat{\epsilon}>0$ of the true gradient with probability at least $1-\hat{\epsilon}$. More formally, we have




\begin{lemma} \label{lemma_GE}
	Assume $f(\cdot)$ satisfies Assumption 1. Given an $\hat{\epsilon}>0$, there are fixed constant $c'_{min}$, sample number $m=O(\frac{d}{\hat{\epsilon}^2}\log(\frac{1}{\hat{\epsilon}}))$ and Gaussian smooth parameter $v=\frac{\hat{\epsilon}}{c'l(d+3)^{1.5}}$, such that for $c'>c_{min}$, the estimated gradient,
	$$\hat{\nabla}=\frac{1}{m}\sum_{i=1}^{m}\frac{f(\bm{x}+v\bm{u})-f(\bm{u})}{v}\bm{u},$$
	is $\hat{\epsilon}$-close to $\nabla f(\bm{x})$ with probability at least $1-\hat{\epsilon}$. 
\end{lemma}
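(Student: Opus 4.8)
The plan is to decompose the estimation error into a bias term and a variance/concentration term. Write $g_i = \frac{f(\bm{x}+v\bm{u}_i)-f(\bm{x})}{v}\bm{u}_i$, so that $\hat{\nabla} = \frac{1}{m}\sum_{i=1}^m g_i$. The quantity $\mathbb{E}[g_i]$ is the gradient of the Gaussian-smoothed function $f_v(\bm{x}) = \mathbb{E}_{\bm{u}\sim\mathcal{N}(0,\mathbf{I}_d)}[f(\bm{x}+v\bm{u})]$. First I would invoke the standard Gaussian-smoothing estimates (as in \citep{nesterov2017random}): using $l$-smoothness one shows $\|\mathbb{E}[g_i] - \nabla f(\bm{x})\| = \|\nabla f_v(\bm{x}) - \nabla f(\bm{x})\| \le \frac{v}{2} l (d+3)^{3/2}$. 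Plugging in $v = \frac{\hat{\epsilon}}{c' l (d+3)^{1.5}}$ gives a bias bounded by $\frac{\hat{\epsilon}}{2c'}$, which is at most $\frac{\hat{\epsilon}}{2}$ whenever $c' \ge 1$. So the bias is controlled by the choice of $v$, and the remaining work is to control $\|\hat{\nabla} - \mathbb{E}[g_i]\|$.

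For the variance/concentration part, I would first bound the second moment of a single sample: $\mathbb{E}\|g_i\|^2 = \mathbb{E}\big[\frac{(f(\bm{x}+v\bm{u}_i)-f(\bm{x}))^2}{v^2}\|\bm{u}_i\|^2\big]$. Using $|f(\bm{x}+v\bm{u}) - f(\bm{x})| \le \|\nabla f(\bm{x})\| \, v\|\bm{u}\| + \frac{l}{2} v^2 \|\bm{u}\|^2$ together with the bound $\|\nabla f(\bm{x})\| \le B$ from Assumption \ref{assum}, and then computing Gaussian moments of $\|\bm{u}\|$ (moments of a $\chi_d$ random variable), one gets a bound of the form $\mathbb{E}\|g_i\|^2 \le 2 c'^2 (d+4) B^2 = \sigma^2$ for an appropriate absolute constant $c'$ — this is precisely why $\sigma^2$ is defined the way it is in Algorithm \ref{GE}, and it pins down $c'_{min}$. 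Then, since the $g_i$ are i.i.d., $\mathbb{E}\|\hat{\nabla} - \mathbb{E}[g_1]\|^2 = \frac{1}{m}\operatorname{Var}(g_1) \le \frac{\sigma^2}{m}$. With $m = \frac{32\sigma^2}{\hat{\epsilon}^2}(\log(1/\hat{\epsilon}) + \tfrac14)$, a vector-valued concentration argument (either a Chebyshev/Markov bound on $\|\hat{\nabla}-\mathbb{E}[g_1]\|^2$, or a vector Bernstein/Hoeffding-type inequality if one wants the logarithmic factor to pay for the high-probability statement) yields $\Pr(\|\hat{\nabla} - \mathbb{E}[g_1]\| > \hat{\epsilon}/2) \le \hat{\epsilon}$. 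Combining with the bias bound via the triangle inequality gives $\|\hat{\nabla} - \nabla f(\bm{x})\| \le \hat{\epsilon}$ with probability at least $1-\hat{\epsilon}$. Finally, checking $m = O(\frac{d}{\hat{\epsilon}^2}\log(1/\hat{\epsilon}))$ is immediate from $\sigma^2 = \Theta(dB^2)$.

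The main obstacle I anticipate is getting the concentration step to deliver a high-probability (rather than merely in-expectation) bound with only the stated $m$, since a naive Markov bound on $\|\hat{\nabla}-\mathbb{E}[g_1]\|^2$ gives failure probability $\sigma^2/(m (\hat{\epsilon}/2)^2)$, which is $O(1/\log(1/\hat{\epsilon}))$ — not $O(\hat{\epsilon})$. To close this gap one needs a genuinely exponential tail bound, which requires controlling the higher moments (or the sub-exponential norm) of $g_i$; the tail of $\|\bm{u}_i\|$ is sub-Gaussian-ish but $\|g_i\|$ involves $\|\bm{u}_i\|^2$-type growth, so some care is needed in the truncation/conditioning argument. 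This is likely where the "conditional probability results" mentioned in the text enter: one conditions on the event that all $\|\bm{u}_i\|$ are not too large (which holds except with probability polynomially small in $\hat{\epsilon}$), on which $g_i$ is bounded and a standard vector Hoeffding/Bernstein bound applies, and one absorbs the small conditioning failure probability into the final $\hat{\epsilon}$. The bias and single-sample second-moment computations are routine Gaussian-smoothing calculus and should go through exactly as in \citep{nesterov2017random}.
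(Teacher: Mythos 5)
Your proposal follows essentially the same route as the paper: split the error into the smoothing bias $\|\nabla f_v - \nabla f\|$ (controlled by the choice of $v$ via the Nesterov--Spokoiny estimates) and the deviation $\|\hat{\nabla}-\nabla f_v\|$, bound the single-sample second moment by $\sigma^2 = 2c'^2(d+4)B^2$, and — exactly as you anticipate in your last paragraph — handle the unboundedness of $\|\bm{u}_i\|$ by conditioning on the event $\|\bm{u}_i\|\le a$ for all $i$ (a Chernoff bound on the $\chi^2_d$ tail with $a = c'\sqrt{d/\hat{\epsilon}}$), applying a vector Bernstein inequality on that event, and absorbing the truncation failure probability $mp$ into the final $\hat{\epsilon}$ by a union bound. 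The obstacle you flag (Chebyshev alone only gives $O(1/\log(1/\hat{\epsilon}))$ failure probability) and your proposed fix are precisely what the paper does.
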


\begin{proof}
	For a function $f$ satisfying $l$-smooth, we define Gaussian smooth function $f_v(\bm{x})=\mathbf{E}_{\bm{u}}[f(\bm{x}+v\bm{u})]$, where $\bm{u}$ is a $d$ dimensional standard Gaussian random vector $\bm{u}\sim\mathcal{N}(0,\mathbf{I}_d)$ and $v\in(0,\infty)$ is smooth parameter. Eq. 21 in Section 2 of \citep{nesterov2017random}  shows that
	\begin{equation}
	\nabla f_v(x)=\mathbf{E}_{\bm{u}}[\frac{f(\bm{x}+v\bm{u})-f(\bm{x})}{v}\bm{u}]
	\end{equation}
	We define a gradient estimator $$\hat{\nabla}=\frac{1}{m}\sum_{i=1}^{m}\frac{f(\bm{x}+v\bm{u}_i)-f(\bm{x})}{v}\bm{u}_i,\ \bm{u}_i\sim\mathcal{N}(0,\mathbf{I}_d)$$
	From Lemma 3 and Theorem 4 in \citep{nesterov2017random}, we see that for any function $f$ satisfying $l$-smooth (Notice in the proof of the first inequality in Theorem 4, no convexity is needed), and for any $\bm{x}\in\mathbb{R}^d$, the following hold:
	\begin{equation}\label{fv_bound}
	\Vert\nabla f_v(\bm{x})-\nabla f(\bm{x})\Vert\leq\frac{v}{2}l(d+3)^{\frac{3}{2}}
	\end{equation}
	\begin{equation}\label{var_bound}
		\begin{aligned}
		\frac{1}{v^2}\mathbf{E}_{\bm{u}}&[\{f(\bm{x}+v\bm{u})-f(\bm{x})\}^2\Vert\bm{u}\Vert^2]\\
		&\leq\frac{v^2}{2}l^2(d+6)^{3}+2(d+4)\Vert\nabla f(\bm{x})\Vert^2
		\end{aligned}
	\end{equation}
	To give the distance between $\hat{\nabla}$ and $\nabla f$ is less than $\hat{\epsilon}$, we split the difference to two terms. Here we only consider $0<\hat{\epsilon}<1$.
	\begin{equation*}
	\Vert\hat{\nabla}-\nabla f\Vert\leq\Vert\nabla f_v-\nabla f\Vert+\Vert\hat{\nabla}-\nabla f_v\Vert
	\end{equation*}
	Choosing $v=\frac{\hat{\epsilon}}{c'l(d+3)^\frac{3}{2}}$, where $c'>1$ is a constant will be defined later, we have $\Vert\nabla f_v-\nabla f\Vert\leq\frac{\hat{\epsilon}}{2}$ based on Eq. \eqref{fv_bound}. To bound the second term, noticing that $\mathbf{E}[\hat{\nabla}]=\nabla f_v(\bm{x})$, choose $$\bm{s}_i=\frac{f(\bm{x}+v\bm{u}_i)-f(\bm{x})}{v}\bm{u}_i-\nabla f_v\quad \bm{s}_i'=\bm{s}_i+\nabla f_v$$
	We immediately know $\mathbf{E}[\bm{s}_i]=0$, and the variance of $\bm{s}_i$ can be bounded by
	\begin{equation*}
	\begin{aligned}
	\mathbf{E}[\Vert\bm{s}_i\Vert^2]&=\mathbf{E}[\Vert\bm{s}_i'-\nabla f_v\Vert^2]
	\overset{(a)}= \mathbf{E}[\Vert\bm{s}_i'\Vert^2]-\Vert\nabla f_v\Vert^2\\		
	&\overset{(b)}\leq 2(d+4)B^2+\frac{v^2l^2}{2}(d+6)^3\\
	&\overset{(c)}\leq 2(d+4)B^2+\frac{4\hat{\epsilon}^2}{c'^2}\\
	&\overset{(d)}\leq 2c'^2(d+4)B^2 =: \sigma^2
	\end{aligned}
	\end{equation*}
	where step $(a)$ follows from $\mathbf{E}[\bm{s}_i']=\nabla f_v$. Step $b$ follows from Eq. \eqref{var_bound} and choosing $B>1$. Step $(c)$ holds due to the definition of $v$. Step $(d)$ follows that we omit the term with $\hat{\epsilon}^2$ by multiplying $c'^2>4$ to the first term. Using $l$-smooth, we have
	\begin{equation*}
		\begin{aligned}
		\Vert f(\bm{x}+v\bm{u})-f(\bm{x})\Vert
		\leq vB\Vert \bm{u}\Vert+\frac{lv^2}{2}\Vert\bm{u}\Vert^2
		\end{aligned}
	\end{equation*}
	Thus, the norm of $\bm{s}_i$ can be bounded as:
	\begin{equation}\label{si_bound}
		\begin{aligned}
		\Vert\bm{s}_i\Vert
		&\leq\frac{\Vert f(\bm{x}+v\bm{u})-f(\bm{x})\Vert\Vert\bm{u}\Vert}{v}+B\\
		&\leq B+B\Vert \bm{u}\Vert^2+\frac{lv}{2}\Vert\bm{u}\Vert^3
		\end{aligned}
	\end{equation}
	However, $\bm{u}$ is a Gaussian random vector, there is no bound for it directly. But we can say, given some constant $a\geq 0$, 
	\begin{equation}\label{p_bound}
	P(\Vert\bm{u}\Vert>a)\leq p
	\end{equation} 
	where $p$ is some probability we will calculate in followings.
	Assume $\bm{u}\sim\mathcal{N}(0,\mathbf{I}_d)$, then $\Vert\bm{u}\Vert^2$ follows chi-squared distribution of $d$ degrees of freedom. Consider random variable $e^{t\Vert\bm{u}^2\Vert}$, where $t$ is a constant. For $t>0$,  $e^{t\Vert\bm{u}\Vert^2}$ is strictly increasing with $\Vert\bm{u}\Vert^2$, and using Markov's inequality we obtain,
	\begin{align}
	P(\Vert\bm{u}\Vert^2>a^2)&=P(e^{t\Vert\bm{u}^2\Vert}>e^{ta^2})\leq\frac{\mathbf{E}[e^{t\Vert\bm{u}^2\Vert}]}{e^{ta^2}}\\
	&=\frac{(1-2t)^{-\frac{d}{2}}}{e^{ta^2}} \label{eq:MGF_chi_squared}\\
	&=(1-2t)^{-\frac{d}{2}}e^{-ta^2},\ \forall\ 0<t<\frac{1}{2} \label{norm_bound}
	\end{align}
	Equation \ref{eq:MGF_chi_squared} comes from using the moment generating function of chi-squared distribution of $d$ degrees of freedom.

	Define $f(t)=(1-2t)^{-\frac{d}{2}}e^{-ta^2}$, and choosing $t=\arg\min f(t) = \frac{1}{2}(1-\frac{d}{a^2})$ in Equation \eqref{norm_bound}, we have:
	\begin{equation}\label{P_bound}
		\begin{aligned}
		P(\Vert\bm{u}\Vert^2>a^2)
		&\leq(\frac{d}{a^2})^{-\frac{d}{2}}e^{-\frac{1}{2}(a^2-d)}\\
		&=d^{-\frac{d}{2}}e^{\frac{d}{2}}a^de^{-\frac{a^2}{2}}
		\end{aligned}
	\end{equation}
	For $0<\hat{\epsilon}<1$, we choose $a=c'\cdot\sqrt{\frac{d}{\hat{\epsilon}}}$ so that $t>0$ always holds. Besides, choose $c'>1$ large enough such that $$P(\Vert\bm{u}\Vert^2>a^2)\leq B^{-2}a^{-8}$$
	Now, assuming that $\Vert\bm{u}\Vert\leq a$, combine with Eq. \eqref{si_bound} we have
	\begin{equation*}
	\begin{aligned}
	\Vert\bm{s}_i\Vert
	&\leq B+Ba^2+\frac{lv}{2}a^3=B+\frac{Bc'^2d}{\hat{\epsilon}}+\frac{lvc'^3}{2}\frac{d^{1.5}}{\hat{\epsilon}^{1.5}}\\
	&\leq B+\frac{Bc'^2d}{\hat{\epsilon}}+\frac{c'^2}{2\hat{\epsilon}^{0.5}}\leq\frac{3Bc'^2d}{\hat{\epsilon}}=:\mu
	\end{aligned}
	\end{equation*}
	Combining with Eq. \eqref{p_bound}, we can say given $m$ samples of $\bm{s}_i^{'}$, with probability at least $1-mp$, $\Vert\bm{u}_i\Vert\leq a\ \forall i=1,\cdots,m$. Let $B>1.5$. Based on  Lemma 18 in \citep{2017arXiv170505933K}, we have vector Bernstein Inequality, based on which for $0<\hat{\epsilon}<\frac{\sigma^2}{\mu}=\frac{2(d+4)}{3d}B\hat{\epsilon}$, we have
	\begin{equation*}
	P\big(\Vert\hat{\nabla}-\nabla f_v\Vert\geq\frac{\epsilon}{2}\big)\leq \exp\big(-m\cdot\frac{\epsilon^2}{32\sigma^2}+\frac{1}{4}\big)
	\end{equation*}
	Choosing $m>\frac{32\sigma^2}{\hat{\epsilon}^2}(\log\frac{2}{\hat{\epsilon}}+\frac{1}{4})$, we have $P\big(\Vert\hat{\nabla}-\nabla f_v\Vert\leq\frac{\hat{\epsilon}}{2}\big)\geq 1-\frac{\hat{\epsilon}}{2}$.
	By union bound, the final probability that $\Vert\hat{\nabla}-\nabla\Vert\leq\hat{\epsilon}$ is at least
	\begin{equation*}
		\begin{aligned}
		&1-mp-\frac{\hat{\epsilon}}{2}\\
		&\geq 1-\frac{32{c'}^2(d+4)B^2}{\hat{\epsilon}^2}(\log\frac{1}{\hat{\epsilon}}+\frac{1}{4})\frac{\hat{\epsilon}^4}{{c'}^8d^4B^2}-\frac{\hat{\epsilon}}{2}\\
		&\overset{(a)}\geq 1-(\frac{1}{4}+\frac{1}{4})\hat{\epsilon}-\frac{\hat{\epsilon}}{2}
		\geq 1-\hat{\epsilon}
		\end{aligned}
	\end{equation*}
	By choosing $c'\geq3$, and noting that $\log\frac{1}{\hat{\epsilon}} \leq \frac{1}{\hat{\epsilon}}$. the inequality $(a)$ holds. This completes the proof of the Lemma. 

\end{proof}

Then, based on this result, we run the gradient descent algorithm with estimated gradient in Algorithm \ref{PGD-MF}.

\subsection{Estimated Gradient Descent Algorithm}
This subsection describes the proposed algorithm, which will be analyzed in this paper. 
\begin{algorithm}[h]
	\label{PGD-MF}
	\caption{Estimated Gradient Descent Algorithm $EGD(\bm{x}_0,d,l,B,\chi_1,\theta,\rho,\epsilon,\hat{\epsilon},c,c',\delta,\Delta_f)$}
	\begin{algorithmic}[1]
		\State $\chi \leftarrow \max\{(1+\theta)\log(\frac{2d\ell\Delta_f}{c\epsilon^2\delta}), \chi_1\}$, $\eta\leftarrow\frac{c}{l}$, $g_{thres}\leftarrow\frac{\sqrt{c}}{\chi^2}\cdot\epsilon$, $f_{thres}\leftarrow\frac{c}{\chi^3}\cdot\sqrt{\frac{\epsilon^3}{\rho}}$, $t_{thres}\leftarrow\frac{\chi}{c^2}\cdot\frac{l}{\sqrt{\rho\epsilon}}$, $t_{temp}\leftarrow-t_{thres}-1$, $r\leftarrow \frac{g_{thres}}{l}$
		\For{$t=0,1,...$}
		\State $\hat{\nabla}f(\bm{x}_t)=GE(d,l,B,c',\hat{\epsilon},\bm{x}_t)$
		\If{$\Vert\hat{\nabla}f(\bm{x}_{t})\Vert\leq g_{thres}$, $t-t_{temp}>t_{thres}$}
		\State $\bm{x}_t\leftarrow\bm{x}_t+\bm{\xi}_t$, $\bm{\xi}_t\sim\mathbb{B}^d(r)$
		\State $t_{temp}\leftarrow t$
		\EndIf
		\If{$t-t_{temp}=t_{thres}$ and  $f(\bm{x}_t)-f(\bm{x}_{t-t_{thres}})>-f_{thres}$}
		\State \Return $\bm{x}_{t-t_{thres}}$
		\EndIf
		\State $\bm{x}_{t+1}\leftarrow\bm{x}_t-\eta\hat{\nabla}f(\bm{x}_t)$
		\EndFor
	\end{algorithmic}
\end{algorithm}

The algorithm is described in Algorithm \ref{PGD-MF} and is denoted as EGD. Line 1 gives the input of the algorithm, $\bm{x}_0$ is the initialization point,  $d,l,B,\hat{\epsilon},c'$ are the same defined in algorithm \ref{GE}, $\rho$ is the $\rho$-Hessian parameter as in Definition \ref{defnhessian},  $\theta$ is any constant larger than 0, and $\chi_1$ is the constant so that $\chi_1^3e^{-\chi_1} \le e^{-\chi_1/(1+\theta)}$. We use $\epsilon$ to denote the $\epsilon$-second-order stationary point. $\Delta f$ is a constant so that $\Delta f\geq(f(\bm{x}_0)-f^*)$. $c>0$ is a constant and $\delta>0$ is used such that the  probability of Algorithm \ref{PGD-MF} working correctly is at least $1-\delta$. Due to only zeroth-order information being available, Algorithm \ref{GE} is first used to give an estimate of gradient in each iteration (Line 3). Then the estimated gradient will be used in gradient descent step to replace the unavailable true gradient (Line 11). Besides, the Line (4 - 6) shows that we add a perturbation from a uniformly distributed ball to $\bm{x_t}$ when $\Vert\hat{\nabla}f(\bm{x}_{t})\Vert\leq g_{thres}$ and $t-t_{temp}>t_{thres}$. This means the perturbation will be added when the gradient is small in order to escape the saddle points and it will be added at most once between $t_{thres}$ steps. (Line 8 - 9) checks the terminal condition of the algorithm. If  $f(\bm{x}_t)-f(\bm{x}_{t-t_{thres}})>-f_{thres}$ meaning that the function has not changed enough in the last $t_{thres}$ steps after adding a perturbation, the algorithm immediately returns the point $\bm{x}_{t-t_{thres}}$ as the final result. Our proof in the following section will show that this will indeed lead to an $\epsilon$-second-order stationary point. Thus, this is the condition of the termination for the for-loop. 

\section{Guarantees for the Proposed Algorithm}
In this section, we will show that the proposed algorithm, EGD, returns an $\epsilon$-second-order stationary point. The main result is given as follows. 


\begin{theorem} \label{thm1}
	Assume that $f$ satisfies Assumption \ref{assum}. Then there exists constants $c_{max}$ and $c'_{min}$ such that, for any $\delta>0$, $c\leq c_{max}$, $c'\geq c'_{min}$, $\Delta_f\geq f(\bm{x}_0)-f^{*}$, $\epsilon>0$, $\theta>0$, Let
	$$\hat{\epsilon}\leq\min\{O(\epsilon), \widetilde{O}(\frac{\epsilon^{3+\frac{\theta}{2}}}{d^{\frac{1}{2}(1+\frac{\theta}{2})}})\}$$ $$\chi=\max\{(1+\frac{\theta}{4})\log(\frac{dl\Delta_f}{c\epsilon^2\delta}),\chi_1\}$$
	and $\chi_1$ is a constant such that $\chi_1^3e^{-\chi_1} \le e^{-\chi_1/(1+\frac{\theta}{4})}$ $EGD(\bm{x}_0,d,l,B,\chi_1,\theta,\rho,\epsilon,\hat{\epsilon},c,c',\delta,\Delta_f)$ will output an $\epsilon$-second-order stationary point with probability of $1-\delta$, and terminate in the following number of iterations:
	$$O\big(\frac{l(f(\bm{x}_0)-f^{*})}{\epsilon^2}\log^4\big(\frac{dl\Delta_f}{\epsilon^2\delta}\big)\big)=\widetilde{O}(\frac{1}{\epsilon^2}).$$
	Further,  the number of calls to the function $f(\cdot)$ for the zeroth order algorithm are 
	$$\widetilde{O}(\frac{d^{2+\frac{\theta}{2}}}{\epsilon^{8+\theta}}).$$
\end{theorem}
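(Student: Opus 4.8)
The plan is to transplant the perturbed‑gradient‑descent analysis of \citep{jin2017escape} to the zeroth‑order setting, treating the estimation error from Lemma \ref{lemma_GE} as a small adversarial perturbation added to the true gradient at every step, and tracking carefully how much that error may be amplified before it destroys either the monotone descent or the saddle escape. I would split the run of EGD into \emph{large‑gradient} steps (no perturbation active and $\|\hat{\nabla} f(\bm{x}_t)\|\ge g_{thres}$) and \emph{escape episodes} (the $t_{thres}$ steps following a perturbation), bound each, and then do one union bound over all randomness. For a large‑gradient step, since $\hat{\nabla}f(\bm{x}_t)$ is $\hat\epsilon$‑close to $\nabla f(\bm{x}_t)$ we have $\|\nabla f(\bm{x}_t)\|\ge g_{thres}-\hat\epsilon$, so $l$‑smoothness with $\eta=c/l$ gives
\[
f(\bm{x}_{t+1})\le f(\bm{x}_t)-\eta\langle\nabla f(\bm{x}_t),\hat{\nabla}f(\bm{x}_t)\rangle+\tfrac{l\eta^2}{2}\|\hat{\nabla}f(\bm{x}_t)\|^2\le f(\bm{x}_t)-\Omega\!\Big(\tfrac{c^2\epsilon^2}{l\chi^4}\Big),
\]
provided $\hat\epsilon=O(\epsilon)$ (small relative to $g_{thres}=\sqrt{c}\,\epsilon/\chi^2$) and $c\le c_{max}$. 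Since $f\ge f^*$, the number of such steps is $O\big(\tfrac{l\chi^4(f(\bm{x}_0)-f^*)}{c^2\epsilon^2}\big)=\widetilde{O}(\epsilon^{-2})$, which is exactly where the $\log^4$ factor of the iteration bound comes from.

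The escape episodes are where I expect the real work to be. When a perturbation is applied at $\bm{x}_t$ we have $\|\nabla f(\bm{x}_t)\|\le g_{thres}+\hat\epsilon$, so $\bm{x}_t$ is nearly first‑order stationary; I would show that if moreover $\lambda_{min}(\nabla^2f(\bm{x}_t))\le-\sqrt{\rho\epsilon}$ (so $\bm{x}_t$ is \emph{not} an $\epsilon$‑second‑order stationary point), then with probability $\ge 1-\delta'$ the inexact iterates satisfy $f(\bm{x}_{t+t_{thres}})-f(\bm{x}_t)\le-f_{thres}$. The argument follows the coupling / ``stuck region is a thin band'' template of \citep{jin2017escape}: run two inexact trajectories whose initial perturbations differ only by a tiny amount $r_0$ along the eigenvector of the most negative Hessian eigenvalue, use that over a window of length $t_{thres}$ this direction is amplified by $(1+\eta\sqrt{\rho\epsilon})^{t_{thres}}$ — a factor polynomial in $d,1/\epsilon,1/\delta$ since $\eta\sqrt{\rho\epsilon}\,t_{thres}=\chi/c$ — to show at least one trajectory leaves a small ball around $\bm{x}_t$, and combine with an ``improve‑or‑localize'' bound (if $f$ has not dropped by $f_{thres}$ then the iterates, error terms included, stay in a ball of radius $\widetilde{O}(\sqrt{\epsilon/\rho})/\chi$ on which $\rho$‑Hessian‑Lipschitzness keeps the quadratic model accurate) to conclude $f$ must in fact have dropped. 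This forces the width of $\cXs$ in that direction to be at most $r_0$, so a uniform perturbation in $\mathbb{B}^d(r)$ misses $\cXs$ except with probability $O(r_0\sqrt{d}/r)$. The genuinely new term relative to \citep{jin2017escape} is the cumulative estimation error along the escaping coordinate, a geometric sum $\sum_{k\le t_{thres}}\eta(1+\eta\sqrt{\rho\epsilon})^{k}\hat\epsilon=\widetilde{O}\big(\hat\epsilon\,e^{\Theta(\chi/c)}/\sqrt{\rho\epsilon}\big)$; demanding that it be dominated by the genuine escape displacement, together with $r_0\sqrt{d}/r\le\delta'$ and $\delta'$ small enough that the union bound over all $\widetilde{O}(\epsilon^{-3/2})$ episodes costs $\le\delta/2$, is precisely what forces the extra $\theta/4$ inside $\chi$ (so the amplification factor $e^{\Theta(\chi/c)}$ and the $\sqrt{d}$ volume term are absorbed into the polylog via the condition $\chi_1^3e^{-\chi_1}\le e^{-\chi_1/(1+\theta/4)}$) and yields the stated $\hat\epsilon\le\widetilde{O}(\epsilon^{3+\theta/2}/d^{\frac12(1+\theta/2)})$.

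To assemble the result: the termination test returns $\bm{x}_{t-t_{thres}}$ exactly when a perturbation was applied $t_{thres}$ steps earlier and $f$ failed to drop by $f_{thres}$, so by the contrapositive of the escape lemma this (with high probability) can only happen at an $\epsilon$‑second‑order stationary point, and the returned point is correct. If EGD never terminates, every large‑gradient step and every escape episode strictly decreases $f$ by a fixed amount, so it runs for at most (large‑gradient count) $+\ t_{thres}\times$(number of episodes) $=\widetilde{O}(\epsilon^{-2})+\widetilde{O}(l/\sqrt{\rho\epsilon})\cdot\widetilde{O}(\epsilon^{-3/2})=\widetilde{O}(\epsilon^{-2})$ iterations. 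A union bound over the $\widetilde{O}(\epsilon^{-2})$ calls to $GE$ (each failing with probability $\le\hat\epsilon$ by Lemma \ref{lemma_GE}, which also makes $\hat\epsilon$ need an extra logarithmic/$\delta$ shrink after multiplying by the iteration count) and over the escape episodes gives overall success probability $1-\delta$. Finally, each iteration makes one call to $GE$, costing $m=O\big(\tfrac{d}{\hat\epsilon^2}\log\tfrac1{\hat\epsilon}\big)$ function queries; multiplying by $\widetilde{O}(\epsilon^{-2})$ and substituting $\hat\epsilon=\widetilde{\Theta}(\epsilon^{3+\theta/2}/d^{\frac12(1+\theta/2)})$ yields $\widetilde{O}(d^{2+\theta/2}/\epsilon^{8+\theta})$ function calls, as claimed.

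The main obstacle is the escape‑episode analysis: one must re‑derive the coupling and improve‑or‑localize estimates of \citep{jin2017escape} in the presence of a per‑step error of size $\hat\epsilon$, simultaneously (i) bounding that error, (ii) controlling its exponential growth along the negative‑curvature direction over the length‑$t_{thres}$ window, and (iii) keeping the energy/localization bookkeeping valid despite the extra terms — and it is this three‑way balance that pins down how small $\hat\epsilon$ must be and explains the logarithmic factors built into $\chi$.
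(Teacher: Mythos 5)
Your proposal follows the paper's proof essentially step for step: the same split into large-gradient descent steps (the paper's Lemma \ref{lemma_geq}) and perturbation/escape episodes handled by the coupled-trajectory, thin-stuck-band argument with the per-step estimation error $\hat{\epsilon}$ tracked through the amplification along the negative-curvature direction (the paper's Lemmas \ref{lemma_leq}--\ref{lemma_esc}), the same union bound over escape episodes and gradient-estimation calls that fixes $\chi$ and $\hat{\epsilon}$, and the same final multiplication by the per-iteration query cost $m$. The approach and the way the constants are balanced match the paper's argument.
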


The rest of the section proves this result. We will first describe the different Lemmas used in the proof, and then use them to give the proof of the theorem.

\subsection{Key Lemmas}
To prove the main result, we first describe two lemmas - Lemma \ref{lemma_geq} and Lemma \ref{lemma_leq}. Lemma  \ref{lemma_geq} indicates that if $\Vert\hat{\nabla}\Vert>g_{thres}$, the function will keep decreasing with the iterations. In other words, we have



\begin{lemma} \label{lemma_geq}
	Assume $f(\cdot)$ satisfies $l$-smooth and $\hat{\nabla}f(\cdot)$ is $\hat{\epsilon}$-close to the $\nabla f(\cdot)$, for any given $\epsilon>0$. Let $\hat\epsilon\leq\frac{\sqrt{c}}{4\chi^2}\cdot
	\epsilon=O(\epsilon)$ and  $c\leq c_{max}$. When $\Vert\hat{\nabla}f(\bm{x}_t)\Vert\geq g_{thres}$, gradient descent with step size $\eta<\frac{1}{l}$ will give
	\begin{equation}
	\label{E2}
	f(\bm{x}_{t+1})\leq f(\bm{x}_t)-\frac{\eta}{4}\Vert\hat{\nabla}f(\bm{x}_t)\Vert
	\end{equation}
\end{lemma}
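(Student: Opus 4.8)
The plan is to combine the standard descent lemma for $l$-smooth functions with a perturbation bound on the estimation error $\bm{e}_t := \hat{\nabla}f(\bm{x}_t) - \nabla f(\bm{x}_t)$, which satisfies $\Vert\bm{e}_t\Vert \le \hat{\epsilon}$ by the hypothesis that $\hat{\nabla}f$ is $\hat{\epsilon}$-close to $\nabla f$. First I would invoke $l$-smoothness, which gives the quadratic upper bound
$$f(\bm{x}_{t+1}) \le f(\bm{x}_t) + \langle \nabla f(\bm{x}_t),\, \bm{x}_{t+1}-\bm{x}_t\rangle + \frac{l}{2}\Vert\bm{x}_{t+1}-\bm{x}_t\Vert^2 .$$
Substituting the update $\bm{x}_{t+1} = \bm{x}_t - \eta\hat{\nabla}f(\bm{x}_t)$ and writing $\nabla f(\bm{x}_t) = \hat{\nabla}f(\bm{x}_t) - \bm{e}_t$, the inner-product term becomes $-\eta\Vert\hat{\nabla}f(\bm{x}_t)\Vert^2 + \eta\langle \bm{e}_t,\, \hat{\nabla}f(\bm{x}_t)\rangle$, and Cauchy--Schwarz together with $\Vert\bm{e}_t\Vert\le\hat{\epsilon}$ bounds the cross term by $\eta\hat{\epsilon}\Vert\hat{\nabla}f(\bm{x}_t)\Vert$.

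Second, I would use $\eta < 1/l$ (which holds since $\eta = c/l$ and $c \le c_{max}$, taking $c_{max}\le 1$) to absorb the quadratic term via $\frac{l\eta^2}{2}\Vert\hat{\nabla}f(\bm{x}_t)\Vert^2 \le \frac{\eta}{2}\Vert\hat{\nabla}f(\bm{x}_t)\Vert^2$, which leaves
$$f(\bm{x}_{t+1}) \le f(\bm{x}_t) - \frac{\eta}{2}\Vert\hat{\nabla}f(\bm{x}_t)\Vert^2 + \eta\hat{\epsilon}\Vert\hat{\nabla}f(\bm{x}_t)\Vert .$$
Finally, I would invoke the regime hypothesis $\Vert\hat{\nabla}f(\bm{x}_t)\Vert \ge g_{thres} = \frac{\sqrt{c}}{\chi^2}\epsilon$ together with $\hat{\epsilon} \le \frac{\sqrt{c}}{4\chi^2}\epsilon = \tfrac14 g_{thres} \le \tfrac14 \Vert\hat{\nabla}f(\bm{x}_t)\Vert$ to conclude $\eta\hat{\epsilon}\Vert\hat{\nabla}f(\bm{x}_t)\Vert \le \frac{\eta}{4}\Vert\hat{\nabla}f(\bm{x}_t)\Vert^2$, giving the stated per-step decrease
$$f(\bm{x}_{t+1}) \le f(\bm{x}_t) - \frac{\eta}{4}\Vert\hat{\nabla}f(\bm{x}_t)\Vert^2 .$$

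There is no genuine obstacle in this lemma; it is essentially the noisy-gradient descent lemma, and the only care needed is constant bookkeeping — choosing $c_{max}$ small enough that $\eta = c/l < 1/l$, and checking that the threshold $\hat{\epsilon} \le \frac{\sqrt{c}}{4\chi^2}\epsilon$ in the statement is precisely what makes the error term at most a quarter of the main descent term once $\Vert\hat{\nabla}f(\bm{x}_t)\Vert \ge g_{thres}$. I expect the appearance of $\Vert\hat{\nabla}f(\bm{x}_t)\Vert$ rather than $\Vert\hat{\nabla}f(\bm{x}_t)\Vert^2$ on the right-hand side of the displayed inequality \eqref{E2} to be a typographical slip, corrected by exactly the chain of inequalities above.
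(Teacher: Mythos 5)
Your proof is correct and follows essentially the same route as the paper's: the $l$-smooth quadratic bound, substitution of the update, decomposition of $\nabla f(\bm{x}_t)$ into $\hat{\nabla}f(\bm{x}_t)$ plus the estimation error, Cauchy--Schwarz on the cross term, absorption of the quadratic term via $\eta<1/l$, and finally $\hat{\epsilon}\le\tfrac14 g_{thres}\le\tfrac14\Vert\hat{\nabla}f(\bm{x}_t)\Vert$. You are also right that the missing square on the right-hand side of \eqref{E2} is a typographical slip; the paper's own chain of inequalities ends with $-\frac{\eta}{4}\Vert\hat{\nabla}f(\bm{x}_t)\Vert^2$.
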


\begin{proof}
	The result is based on the smoothness property of the function and that the estimated gradient is close to the actual gradient. The steps for the proof can be seen as
\begin{equation}\label{Decrease}
	\begin{aligned}
	&f(\bm{x}_{t+1})\\
	&\overset{(a)}\leq f(\bm{x}_t)+\nabla f(\bm{x}_t)^T(\bm{x}_{t+1}-\bm{x}_t)+\frac{l}{2}\Vert\bm{x}_{t+1}-\bm{x}_t\Vert^2\\
	&\overset{(b)}=f(\bm{x}_t)-\eta\nabla f(\bm{x}_t)^T\hat{\nabla}f(\bm{x}_t)+\frac{l\eta^2}{2}\Vert\hat{\nabla} f(\bm{x}_t)\Vert^2\\
	&=f(\bm{x}_t)-\eta [\nabla f(\bm{x}_t)-\hat{\nabla}f(\bm{x}_t)+\hat{\nabla}f(\bm{x}_t)]   ^T\hat{\nabla}f(\bm{x}_t)\\& \quad +\frac{l\eta^2}{2}\Vert\hat{\nabla} f(\bm{x}_t)\Vert^2\\
	&\overset{(c)}=f(\bm{x}_t)+\eta\Vert\nabla f(\bm{x}_t)-\hat{\nabla}f(\bm{x}_t)\Vert\Vert\hat{\nabla}f(\bm{x}_t)\Vert\\&\quad+\frac{l\eta^2}{2}\Vert\hat{\nabla} f(\bm{x}_t)\Vert^2-\eta\Vert\hat{\nabla} f(\bm{x}_t)\Vert^2\\
	&\overset{(d)}\leq f(\bm{x}_t)+\eta\Vert\nabla f(\bm{x}_t)-\hat{\nabla}f(\bm{x}_t)\Vert\Vert\hat{\nabla}f(\bm{x}_t)\Vert\\&\quad-\frac{\eta}{2}\Vert\hat{\nabla} f(\bm{x}_t) \Vert^2\\
	&\overset{(e)}\leq f(\bm{x}_t)-\frac{\eta}{2}\Vert\hat{\nabla} f(\bm{x}_t) \Vert^2+\eta\hat{\epsilon}\Vert\hat{\nabla}f(\bm{x}_t)\Vert\\
	&\overset{(f)}\leq f(\bm{x}_t)-\frac{\eta}{4}\Vert\hat{\nabla} f(\bm{x}_t) \Vert^2
	\end{aligned}
\end{equation}
	The inequality $(a)$ directly follows from the $l$-smooth property. $(b)$ uses the gradient descent step in Algorithm \ref{PGD-MF}. $(d)$ and $(e)$ holds due to the condition $\eta<\frac{1}{l}$ and $\hat{\nabla}f$ is $\hat{\epsilon}$ close to the $\nabla f$, respectively. Finally, from $\hat{\epsilon}\leq\frac{g_{thres}}{4}\leq\frac{\Vert\hat{\nabla}f(\bm{x}_t)\Vert}{4}$, $(f)$ follows.
\end{proof}

Besides, we note that when $\Vert\hat{\nabla}f(\bm{x})\Vert<g_{thres}$, we have
\begin{equation*}
	\begin{aligned}
	\Vert\nabla f\Vert&=\Vert(\nabla f-\hat{\nabla}f)+\hat{\nabla}f\Vert\\
	&\leq\Vert\nabla f-\hat{\nabla}f\Vert+\Vert\hat{\nabla}f\Vert\\
	&\leq\hat{\epsilon}+\frac{\sqrt{c}}{\chi^2}\epsilon=\frac{5}{4}\frac{\sqrt{c}}{\chi^2}\epsilon\leq\epsilon
	\end{aligned}
\end{equation*} 

By choosing $c<\frac{1}{4}$, the last inequality holds since $\chi>1$. Thus, any $\bm{x}$ satisfying $\Vert\hat{\nabla}f(\bm{x})\Vert<g_{thres}$ is a first order stationary point and  satisfies the first requirement of an $\epsilon$-second-order stationary point.

The next result, Lemma  \ref{lemma_leq}, indicates that if $\Vert\hat{\nabla}f(\widetilde{\bm{x}})\Vert\leq g_{thres}$ and  $\lambda_{min}(\nabla^2f(\widetilde{\bm{x}}))\leq-\sqrt{\rho\epsilon}$, inficating that it is (approximately) first order stationary point with estimated gradient while not (approximately) a second-order stationary point, the proposed algorithm will escape this saddle point by decreasing more than $f_{thres}$ in $t_{thres}$ iterations. 

\begin{lemma} \label{lemma_leq}
	There exist absolute constant $c_{max}$ such that: if $f(\cdot)$ satisfies $l$-smooth and $\rho$-Hessian Lipschitz and any $c\leq c_{max}$, $\hat{\delta}=\frac{dl}{\sqrt{\rho\epsilon}}e^{-\chi}<1$, $$\hat{\epsilon}\leq\min\{O(\epsilon),\widetilde{O}(\frac{\epsilon^{3+\frac{\theta}{2}}}{d^{\frac{1}{2}(1+\frac{\theta}{2})}})\}$$ (we will see $O(\epsilon)$ and $\widetilde{O}(\frac{\epsilon^{3+\frac{\theta}{2}}}{d^{\frac{1}{2}(1+\frac{\theta}{2})}})$ in following lemmas). Let $\eta,r,g_{thres},f_{thres},t_{thres}$ defined as in Algorithm \ref{PGD-MF}. Define $\gamma=\sqrt{\rho\epsilon}$, $\mathcal{T}=\frac{t_{thres}}{c}=\frac{\chi}{\eta\gamma}$ Then if $\widetilde{\bm{x}}$ satisfies:
	$$\Vert\hat{\nabla}f(\widetilde{\bm{x}})\Vert\leq g_{thres}\quad and\quad \lambda_{min}(\nabla^2f(\widetilde{\bm{x}}))\leq-\sqrt{\rho\epsilon}$$
	Let, $\bm{x}_0=\widetilde{\bm{x}}+\bm{\xi}$, where $\bm{\xi}$ comes from the uniform distribution over ball with radius $r=\frac{\sqrt{c}}{\chi^2}\cdot\frac{\epsilon}{l}$. Then with at least probability $1-\hat{\delta}$, we have for  $T=t_{thres}=\frac{\mathcal{T}}{c}$:
	$$f(\bm{x}_{T})-f(\widetilde{\bm{x}})\leq-f_{thres}$$
\end{lemma}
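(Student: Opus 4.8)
\emph{Proof plan.} The plan is to adapt the two‑point coupling (``pancake'') argument of \citep{jin2017escape} to the estimated‑gradient setting. Write each step as $\bm x_{t+1} = \bm x_t - \eta\big(\nabla f(\bm x_t) + \bm\zeta_t\big)$ with $\bm\zeta_t := \hat\nabla f(\bm x_t) - \nabla f(\bm x_t)$; by Lemma \ref{lemma_GE} each $\norm{\bm\zeta_t}\le\hat\epsilon$ with probability $\ge 1-\hat\epsilon$, so by a union bound $\norm{\bm\zeta_t}\le\hat\epsilon$ for all $0\le t\le t_{thres}$ with probability $\ge 1-t_{thres}\hat\epsilon$, and I would condition on this event. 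Take $\widetilde{\bm x}=\bm 0$ without loss of generality, let $\mathcal H = \nabla^2 f(\bm 0)$, and let $\bm e_1$ be a unit eigenvector of $\mathcal H$ with eigenvalue $\lambda_1\le-\gamma$, $\gamma=\sqrt{\rho\epsilon}$. Call $\bm\xi\in\mathbb B^d(r)$ \emph{stuck} if the trajectory started at $\bm x_0 = \bm\xi$ has $f(\bm x_{t_{thres}}) - f(\bm 0) > -f_{thres}$, and let $\cXs\subseteq\mathbb B^d(r)$ be the stuck set. Since a non‑stuck $\bm\xi$ satisfies exactly the claimed inequality, it suffices to show $\mathrm{Vol}(\cXs)/\mathrm{Vol}(\mathbb B^d(r))\le\hat\delta$.

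First I would establish an ``improve‑or‑localize'' bound. Reusing the smoothness computation from the proof of Lemma \ref{lemma_geq} up to step $(e)$ — which does not use the lower bound $\norm{\hat\nabla f}\ge g_{thres}$ — and applying AM–GM gives $f(\bm x_{t+1}) \le f(\bm x_t) - \tfrac{\eta}{4}\norm{\hat\nabla f(\bm x_t)}^2 + \eta\hat\epsilon^2$. Telescoping this, noting the initial perturbation raises the value by at most $O(g_{thres}r + lr^2)=O(f_{thres})$, and applying Cauchy–Schwarz to $\bm x_t - \bm x_0 = -\eta\sum_{s<t}\hat\nabla f(\bm x_s)$, every stuck trajectory stays within $\mathscr S := O\big(\sqrt{\eta t_{thres} f_{thres}} + \eta t_{thres}\hat\epsilon\big)$ of $\bm 0$ for all $t\le t_{thres}$; the algorithm's parameters give $\mathscr S = \widetilde O\big(\tfrac1\chi\sqrt{\epsilon/\rho}\big)$ provided $\hat\epsilon\le\widetilde O(\epsilon)$ so the $\eta t_{thres}\hat\epsilon$ term is not dominant — this, together with Lemma \ref{lemma_geq}, is where the $O(\epsilon)$ part of the hypothesis on $\hat\epsilon$ is used. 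In particular $\rho\mathscr S$ is below $\gamma$ by a factor $\Theta(\chi)$, so $\nabla^2 f$ stays within $\gamma/\Theta(\chi)$ of $\mathcal H$ along any stuck trajectory.

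The core step is the coupling. Take $\bm x_0,\bm x_0'$ with $\bm x_0 - \bm x_0' = r_0\bm e_1$, suppose both are stuck, and set $\bm w_t = \bm x_t - \bm x_t'$. Writing $\nabla f(\bm x_t) - \nabla f(\bm x_t') = (\mathcal H + \bm\Delta_t)\bm w_t$ with $\norm{\bm\Delta_t}\le\rho\mathscr S$ (by $\rho$‑Hessian‑Lipschitzness and localization), the difference obeys
\begin{equation*}
\bm w_{t+1} = (\mathbf I - \eta\mathcal H)\bm w_t - \eta\bm\Delta_t\bm w_t - \eta(\bm\zeta_t - \bm\zeta_t'),
\end{equation*}
a recursion that expands the $\bm e_1$‑component by $1+\eta\gamma$ per step and is non‑expanding on $\bm e_1^\perp$, perturbed by terms of size $\le\rho\mathscr S\norm{\bm w_t}$ and $\le 2\hat\epsilon$. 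The induction of \citep{jin2017escape} then shows that, as long as $\rho\mathscr S$ is small (already guaranteed) and the geometric sum $\sum_{s<t}(1+\eta\gamma)^{t-1-s}2\eta\hat\epsilon \le \tfrac{2\hat\epsilon}{\gamma}(1+\eta\gamma)^t$ of estimation errors stays below $\tfrac14 r_0(1+\eta\gamma)^t$, the $\bm e_1$‑direction dominates and $\norm{\bm w_t}\ge\tfrac{r_0}{4}(1+\eta\gamma)^t$. At $t=t_{thres}$ this yields $\norm{\bm w_{t_{thres}}}\ge\tfrac{r_0}{4}e^{\Omega(\chi)}$, contradicting localization ($\norm{\bm w_{t_{thres}}}\le O(\mathscr S)$) unless $r_0\le w := \max\big\{\tfrac{16\hat\epsilon}{\gamma},\, C\mathscr S e^{-\Omega(\chi)}\big\}$. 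Hence on every line parallel to $\bm e_1$ the set $\cXs$ has length $\le w$, so $\mathrm{Vol}(\cXs)\le w\,\mathrm{Vol}(\mathbb B^{d-1}(r))$ and
\begin{equation*}
\frac{\mathrm{Vol}(\cXs)}{\mathrm{Vol}(\mathbb B^d(r))} \le \frac{w}{r}\cdot\frac{\mathrm{Vol}(\mathbb B^{d-1}(r))}{\mathrm{Vol}(\mathbb B^d(r))} \le \frac{w\sqrt d}{r}.
\end{equation*}
With $\chi = \widetilde\Theta\big(\log\tfrac{dl\Delta_f}{c\epsilon^2\delta}\big)$ the term $C\mathscr S e^{-\Omega(\chi)}\sqrt d/r$ is $\le\hat\delta = \tfrac{dl}{\sqrt{\rho\epsilon}}e^{-\chi}$ as soon as $\chi=O(\sqrt d)$, while the requirement $16\hat\epsilon\sqrt d/(\gamma r)\le\hat\delta$, using $\gamma=\sqrt{\rho\epsilon}$, $r=\tfrac{\sqrt c\epsilon}{\chi^2 l}$, and the value of $\chi$, unwinds to exactly $\hat\epsilon\le\widetilde O\big(\epsilon^{3+\theta/2}/d^{\frac12(1+\theta/2)}\big)$. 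Together with the $t_{thres}\hat\epsilon$ from the conditioning event, the total failure probability is at most $\hat\delta$ after a harmless adjustment of constants, which completes the proof.

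I expect the main obstacle to be the coupling induction once the estimation error is present: the same factor $1+\eta\gamma$ that drives the escape also amplifies the per‑step errors $\bm\zeta_t-\bm\zeta_t'$, so one must show the ``signal'' $r_0$ — which itself has to be smaller than $r$ for the volume bound to be meaningful — still beats the geometrically accumulated noise, and reconciling these two constraints is exactly what pins down the quantitative requirement on $\hat\epsilon$. A secondary and genuinely zeroth‑order difficulty is the probabilistic bookkeeping: Lemma \ref{lemma_GE} controls the estimator only pointwise and only with probability $1-\hat\epsilon$, so the union bound over the $t_{thres}$ iterations (and, for the hypothetical second coupled trajectory, the use of the deterministic envelope $\norm{\bm\zeta_t-\bm\zeta_t'}\le 2\hat\epsilon$ together with a Fubini‑type decomposition of the perturbation into its $\bm e_1$ and $\bm e_1^\perp$ components) must be threaded through without inflating $\hat\delta$.
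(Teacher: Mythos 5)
Your plan is fundamentally the same coupling (``pancake'') argument the paper uses: condition on $\hat{\epsilon}$-accurate estimates, localize stuck trajectories, couple two initial points separated along $\bm{e}_1$, show the separation is amplified geometrically while the per-step error $2\eta\hat{\epsilon}$ is beaten by the signal, conclude the stuck set is thin along $\bm{e}_1$, and bound its volume by the $(d-1)$-ball ratio (the paper does this via Gautschi's inequality in Appendix D). You also correctly identify that the binding constraint producing $\hat{\epsilon}\leq\widetilde{O}(\epsilon^{3+\theta/2}/d^{\frac{1}{2}(1+\theta/2)})$ is ``accumulated estimation error $\hat{\epsilon}/\gamma$ versus minimal separation $\hat{\delta}r/(2\sqrt{d})$,'' which is exactly how Eq.~\eqref{eps_cube_bound} arises in the proof of Lemma~\ref{lemma_wt}.

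The one step that would fail as written is the interaction between your localization radius and your coupling horizon. With this paper's fixed parameters, $\eta t_{thres}=\chi/(c\gamma)$ and $f_{thres}=\frac{c}{\chi^3}\sqrt{\epsilon^3/\rho}$, so improve-or-localize gives $\mathscr{S}=\Theta(\frac{1}{\chi}\sqrt{\epsilon/\rho})$ with no factor of $c$; the per-step Hessian-variation perturbation in the coupling recursion is then $\sigma=\eta\rho\mathscr{S}=\Theta(\eta\gamma/\chi)$, which over your horizon $t_{thres}=\chi/(c\eta\gamma)$ accumulates to $\sigma t_{thres}=\Theta(1/c)$ --- and even over the shorter $\mathcal{T}=\chi/(\eta\gamma)$ needed to amplify $r_0$ by $e^{\chi}$ it accumulates to $\Theta(1)$ with a constant larger than the $1/4$ that the induction $\varphi_t\leq 4\sigma t\,\psi_t$ requires (``$\rho\mathscr{S}$ small'' is not the right condition; $\sigma\times\text{horizon}\leq 1/4$ is). This is precisely why the paper does not localize over the whole horizon: Lemma~\ref{lemma_ut} localizes only up to the stopping time $T_1\leq\hat{c}\mathcal{T}$ defined through the quadratic model $\widetilde{f}_{\bm{u}_0}$, at the smaller radius $100\hat{c}\mathcal{P}$ with $\mathcal{P}=\frac{\sqrt{c}}{\chi}\sqrt{\epsilon/\rho}$ --- the extra $\sqrt{c}$ is what makes $4\sigma T_2\leq 4\sqrt{c}(300\hat{c}+1)\hat{c}\leq 1$ in Eq.~\eqref{sigmat_bound} --- and the remaining $t_{thres}-T'$ steps are controlled separately by the small-gradient increase bound $f(\bm{u}_T)-f(\bm{u}_{T'})\leq 1.5f_{thres}$ in Lemma~\ref{lemma_esc}. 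To close your version you would need either this stopping-time device or a retuning of the constants in $f_{thres}$, $r$, and $t_{thres}$, which are fixed by Algorithm~\ref{PGD-MF}.
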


Note that $\delta$ is the probability defined for the algorithm \ref{PGD-MF} and $\hat{\delta}$ is the probability defined for Lemma \ref{lemma_leq}. We first describe the key results to prove this lemma and then give the steps that use these results to prove Lemma \ref{lemma_leq}.

\subsubsection{Key results to prove Lemma \ref{lemma_leq}}

Let $\widetilde{\bm{x}}$ satisfies the conditions in Lemma \ref{lemma_leq}, and without loss of generality let $\bm{e}_1$ be the minimum eigenvector of $\nabla^2f(\widetilde{\bm{x}})$.  Consider two gradient descent sequences $\{\bm{u}_t\}$,$\{\bm{w}_t\}$ with initial points $\bm{u}_0,\bm{w}_0$ satisfying:
$$\Vert\bm{u}_0-\widetilde{\bm{x}}\Vert\leq r,\quad \bm{w}_0=\bm{u}_0+\mu r\bm{e}_1,\mu\in[\frac{\hat{\delta}}{2\sqrt{d}},1].$$ Further, let $\mathcal{P}=\frac{\sqrt{c}}{\chi}\sqrt{\frac{\epsilon}{\rho}}$, $\mathcal{H}=\nabla^2 f(\widetilde{\bm{x}})$, and $\widetilde{f}_{\bm{y}}(\bm{x}):=f(\bm{y})+\nabla f(\bm{y})^T(\bm{x}-\bm{y})+\frac{1}{2}(\bm{x}-\bm{y})^T\mathcal{H}(\bm{x}-\bm{y})$  be a quadratic approximation of $f$ on $\bm{x}$. 

The next result, Lemma \ref{lemma_ut},  shows that if $\Vert\bm{u}_0-\widetilde{\bm{x}}\Vert\leq2r$, we have $\Vert\bm{u}_t-\widetilde{\bm{x}}\Vert\leq100(\mathcal{P}\cdot\hat{c})$ for all $t<T_1$, where $T_1$ is defined in the following result. 


\begin{lemma} \label{lemma_ut}
	Let $f(\cdot),\widetilde{\bm{x}}$ satisfies the conditions in Lemma \ref{lemma_leq}, for any initial point $\bm{u}_0$ with $\Vert\bm{u}_0-\widetilde{\bm{x}}\Vert\leq2r$. Let 	$$T_1=\min\big\{\inf_t\{t\vert\widetilde{f}_{\bm{u}_0}(\bm{u}_t)-f(\bm{u}_0)\leq-4.5f_{thres}\},\hat{c}\mathcal{T}\big\}.$$ 
	Then, there exist absolute constant $c_{max}$ such that for any constant $\hat{c}>3$, $c\leq c_{max}$,  $\hat{\epsilon}\leq\frac{\sqrt{c}}{4\chi^2}\cdot\epsilon=O(\epsilon)$ and $t<T_1$, we have  $\Vert\bm{u}_t-\widetilde{\bm{x}}\Vert\leq100(\mathcal{P}\cdot\hat{c})$.
\end{lemma}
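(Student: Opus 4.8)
\emph{Proof approach.}
The plan is to prove, by strong induction on $t$, the localization invariant $\|\bm u_t-\bm u_0\|\le R$ for every $t<T_1$, with $R:=50\,\hat c\,\mathcal P$; the stated conclusion then follows at once, since $\|\bm u_t-\widetilde{\bm x}\|\le\|\bm u_t-\bm u_0\|+\|\bm u_0-\widetilde{\bm x}\|\le R+2r\le 100\,\hat c\,\mathcal P$, using that $r/\mathcal P=\sqrt{\rho\epsilon}/(\chi l)\le 1$. Write $\bm\delta_t:=\hat\nabla f(\bm u_t)-\nabla\widetilde f_{\bm u_0}(\bm u_t)$, where $\nabla\widetilde f_{\bm u_0}(\bm u_t)=\nabla f(\bm u_0)+\mathcal H(\bm u_t-\bm u_0)$ and $\mathcal H=\nabla^2f(\widetilde{\bm x})$. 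Two elementary facts will be used repeatedly: (i) $l$-smoothness gives $\|\mathcal H\|\le l$, so the quadratic $\widetilde f_{\bm u_0}$ is itself $l$-smooth; (ii) writing $\nabla f(\bm u_t)-\nabla f(\bm u_0)=\int_0^1\nabla^2f\big(\bm u_0+s(\bm u_t-\bm u_0)\big)(\bm u_t-\bm u_0)\,ds$ and combining $\rho$-Hessian Lipschitzness, $\|\bm u_0-\widetilde{\bm x}\|\le 2r$, and the $\hat\epsilon$-closeness of $\hat\nabla f$, one obtains $\|\bm\delta_t\|\le\hat\epsilon+\rho\big(2r+\|\bm u_t-\bm u_0\|\big)\|\bm u_t-\bm u_0\|$.

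\emph{Inductive step, part I: improve-or-localize on the surrogate.}
Fix $t$ with $t+1<T_1\le\hat c\mathcal T$ and assume $\|\bm u_\tau-\bm u_0\|\le R$ for all $\tau\le t$; then $\|\bm\delta_\tau\|\le D:=\hat\epsilon+2\rho rR+\rho R^2$ for $\tau\le t$. Expanding the quadratic $\widetilde f_{\bm u_0}$ exactly along the update $\bm u_{\tau+1}-\bm u_\tau=-\eta\hat\nabla f(\bm u_\tau)$, bounding the curvature term by $\tfrac{l\eta^2}{2}\|\hat\nabla f(\bm u_\tau)\|^2$ (using $\lambda_{\max}(\mathcal H)\le l$), splitting $\hat\nabla f(\bm u_\tau)=\nabla\widetilde f_{\bm u_0}(\bm u_\tau)+\bm\delta_\tau$, and using $\eta=c/l$ with $c\le c_{max}\le\tfrac12$ together with Young's inequality, I get the per-step decrease $\widetilde f_{\bm u_0}(\bm u_{\tau+1})\le\widetilde f_{\bm u_0}(\bm u_\tau)-\tfrac{\eta}{4}\|\nabla\widetilde f_{\bm u_0}(\bm u_\tau)\|^2+2\eta\|\bm\delta_\tau\|^2$. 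Telescoping from $0$ to $t$, using $\widetilde f_{\bm u_0}(\bm u_0)=f(\bm u_0)$ and the fact that $t+1<T_1$ forces $\widetilde f_{\bm u_0}(\bm u_{t+1})-f(\bm u_0)>-4.5f_{thres}$, this yields $\tfrac{\eta}{4}\sum_{\tau=0}^{t}\|\nabla\widetilde f_{\bm u_0}(\bm u_\tau)\|^2<4.5f_{thres}+2\eta(t{+}1)D^2$.

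\emph{Inductive step, part II: from decrease to displacement, and closing up.}
By the triangle inequality and Cauchy--Schwarz, $\|\bm u_{t+1}-\bm u_0\|^2\le\eta^2(t{+}1)\sum_{\tau=0}^{t}\|\hat\nabla f(\bm u_\tau)\|^2\le 2\eta^2(t{+}1)\big(\sum_{\tau=0}^{t}\|\nabla\widetilde f_{\bm u_0}(\bm u_\tau)\|^2+\sum_{\tau=0}^{t}\|\bm\delta_\tau\|^2\big)$; substituting the bound of part I and $\sum_{\tau=0}^{t}\|\bm\delta_\tau\|^2\le(t{+}1)D^2$ gives $\|\bm u_{t+1}-\bm u_0\|\le 6\sqrt{\eta(t{+}1)f_{thres}}+\sqrt{18}\,\eta(t{+}1)D$. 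Now I substitute the numerics: $t+1<\hat c\mathcal T=\hat c\chi/(\eta\sqrt{\rho\epsilon})$ gives $\eta(t{+}1)\le\hat c\chi/\sqrt{\rho\epsilon}$, and with $f_{thres}=\tfrac{c}{\chi^3}\sqrt{\epsilon^3/\rho}$ and $\mathcal P=\tfrac{\sqrt c}{\chi}\sqrt{\epsilon/\rho}$ one checks $\eta(t{+}1)f_{thres}\le\hat c\mathcal P^2$, so the first term is $\le 6\sqrt{\hat c}\,\mathcal P\le\tfrac14 R$; for the second term, using $r=\tfrac{\sqrt c}{\chi^2}\tfrac{\epsilon}{l}$, $R=50\hat c\mathcal P$, and $\hat\epsilon\le\tfrac{\sqrt c}{4\chi^2}\epsilon$, each of the three pieces of $\sqrt{18}\,\eta(t{+}1)D$ is $\mathcal P$ times a constant multiple of $\hat c$ carrying a strictly positive power of $\sqrt c$ or of $\sqrt{\rho\epsilon}/(\chi l)$, hence each is $\le\tfrac1{12}R$ once $c_{max}$ is small enough (one finds $c_{max}=\Theta(\hat c^{-4})$ suffices, in addition to $c_{max}\le\tfrac12$). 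Therefore $\|\bm u_{t+1}-\bm u_0\|\le\tfrac12 R\le R$, which closes the induction. The one genuinely delicate point — and the reason the constant bookkeeping must be done carefully — is the self-reference: $\|\bm\delta_\tau\|$ is controlled only through $\|\bm u_\tau-\bm u_0\|$, the very quantity being bounded. The induction resolves this, but it forces the powers of $c$, $\chi$, and $\hat c$ to line up so that every stray factor of $\chi$ cancels and the total displacement ends up dominated by $\mathcal P\hat c$; this matching is precisely why $c_{max}$ must be taken to shrink with $\hat c$.
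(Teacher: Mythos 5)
Your proof is correct in its broad strokes and is a genuinely different argument from the paper's. The paper proves $\Vert\bm{u}_t-\widetilde{\bm{x}}\Vert\leq 100(\mathcal{P}\cdot\hat{c})$ by decomposing $\bm{u}_t$ onto the eigenspace $\mathcal{S}$ of $\mathcal{H}$ with eigenvalue below $-\gamma/(\hat{c}\chi)$ and its complement $\mathcal{S}^c$, then running an induction that bounds $\Vert\bm{\beta}_t\Vert$ and $\bm{\beta}_t^{\top}\mathcal{H}\bm{\beta}_t$ separately — the latter requiring an explicit unrolled-sum representation and a spectral analysis of $(\mathbf{I}-\eta\mathcal{H})^{\tau_1}\mathcal{H}(\mathbf{I}-\eta\mathcal{H})^{\tau_2}$. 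You instead run an ``improve-or-localize'' argument directly on the quadratic surrogate $\widetilde{f}_{\bm{u}_0}$: the exact one-step expansion of the quadratic, $\Vert\mathcal{H}\Vert\le l$, and Young's inequality give a per-step drop $\widetilde{f}_{\bm{u}_0}(\bm{u}_{\tau+1})\le\widetilde{f}_{\bm{u}_0}(\bm{u}_\tau)-\tfrac{\eta}{4}\Vert\nabla\widetilde{f}_{\bm{u}_0}(\bm{u}_\tau)\Vert^2+\tfrac{3\eta}{2}\Vert\bm{\delta}_\tau\Vert^2$ (your $2\eta$ is a harmless overestimate); telescoping against the $T_1$-threshold bounds $\sum\Vert\nabla\widetilde{f}_{\bm{u}_0}(\bm{u}_\tau)\Vert^2$, and Cauchy--Schwarz converts that energy bound into a displacement bound. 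Your self-referential control of $\Vert\bm{\delta}_\tau\Vert$ via the induction hypothesis is sound, the arithmetic with $\eta(t+1)\le\hat{c}\chi/\sqrt{\rho\epsilon}$, $\eta(t+1)f_{thres}\le\hat{c}\mathcal{P}^2$, $r/\mathcal{P}=1/(\chi\kappa)\le1$ all checks, and the claim that each of the three pieces of $\sqrt{18}\,\eta(t+1)D$ fits under $R/12$ works (though the $\hat{\epsilon}$ piece is already $\approx 1.06\,\hat{c}\mathcal{P}$ without any help from small $c$; only the $\rho rR$ and $\rho R^2$ pieces need the $c_{\max}$ shrinkage).

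The trade-off is quantitative: your route forces $c_{\max}=\Theta(\hat{c}^{-4})$, whereas the paper's subspace argument gets away with the milder $c_{\max}\le\tfrac{1}{4}\cdot\tfrac{1}{100\hat{c}(100\hat{c}+2)}=\Theta(\hat{c}^{-2})$. Since $\hat{c}$ is an absolute constant fixed downstream in Lemma~\ref{lemma_wt}, this does not affect any stated asymptotics, but it is worth flagging because Lemma~\ref{lemma_esc} takes the minimum of the two $c_{\max}$ constraints and the constants would change. What you gain is conceptual: no eigenspace splitting, no maximizer analysis of $\lambda(1-\eta\lambda)^t$, and no double sum over $\tau_1,\tau_2$ — the whole localization follows from a single one-line energy identity and Cauchy--Schwarz.
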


\begin{proof}
	The proof is provided in Appendix \ref{apdlem5}. 
\end{proof}
Let
\begin{equation}\label{eps_cube_bound}
	\hat{\epsilon}\leq\frac{2-\sqrt{2}}{2}\frac{c\sqrt{\epsilon^3\rho}}{\chi^3l}\frac{\hat{\delta}}{2\sqrt{d}}(300\hat{c}+1)=\widetilde{O}(\frac{\epsilon^{3+\frac{\theta}{2}}}{d^{\frac{1}{2}(1+\frac{\theta}{2})}})
\end{equation} $$$$
where $\theta>0$ is a constant we define in theorem \ref{thm1}.
The next result shows that if $\Vert\bm{u}_t-\widetilde{\bm{x}}\Vert\leq100(\mathcal{P}\cdot\hat{c})$, we will have $T_2<\hat{c}\mathcal{T}$, where $T_2$ is as in the statement of the following Lemma. Besides, we will also see how to derive the above $\hat{\epsilon}$ in the proof of this lemma.


\begin{lemma} \label{lemma_wt}
	Let $f(\cdot)$, $\widetilde{\bm{x}}$ satisfy the conditions in Lemma \ref{lemma_leq}. Let $$T_2=\min\{\inf_t\{t\vert\widetilde{f}_{\bm{w}_0}(\bm{w}_t)-f(\bm{w}_0)\leq-4.5f_{thres}\},\hat{c}\mathcal{T}\}.$$
	There are absolute constants $c_{max}$, and $\hat{c}$ such that for any $c\leq c_{max}$, $\hat{\epsilon}$ satisfies Eq. \eqref{eps_cube_bound}, if $\Vert\bm{u}_t-\widetilde{\bm{x}}\Vert\leq100(\mathcal{P}\cdot\hat{c})$ for all $t<T_2$, we have $T_2<\hat{c}\mathcal{T}$ 
\end{lemma}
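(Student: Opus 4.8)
The plan is to argue by contradiction: suppose $T_2=\hat{c}\mathcal{T}$, and derive a contradiction from the coupling between $\{\bm{u}_t\}$ and $\{\bm{w}_t\}$. First note that $\bm{w}_0=\bm{u}_0+\mu r\bm{e}_1$ with $\|\bm{u}_0-\widetilde{\bm{x}}\|\le r$ and $\mu\le 1$, so $\|\bm{w}_0-\widetilde{\bm{x}}\|\le 2r$; moreover the quantity ``$T_1$'' of Lemma~\ref{lemma_ut} attached to the initial point $\bm{w}_0$ is exactly the present $T_2$. Hence Lemma~\ref{lemma_ut}, applied to the sequence started at $\bm{w}_0$, gives $\|\bm{w}_t-\widetilde{\bm{x}}\|\le 100(\mathcal{P}\hat{c})$ for all $t<T_2$, while the hypothesis of the present lemma gives the same bound for $\bm{u}_t$. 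Writing $\bm{v}_t:=\bm{w}_t-\bm{u}_t$, we get $\|\bm{v}_t\|\le 200(\mathcal{P}\hat{c})$ for all $t<T_2$.

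Subtracting the two gradient-descent updates of Algorithm~\ref{PGD-MF} gives $\bm{v}_{t+1}=(\mathbf{I}-\eta\mathcal{H})\bm{v}_t-\eta\bm{R}_t\bm{v}_t-\eta\bm{\zeta}_t$, where $\bm{R}_t=\int_0^1\big(\nabla^2 f(\bm{u}_t+s\bm{v}_t)-\mathcal{H}\big)\,ds$ and $\bm{\zeta}_t=\big(\hat{\nabla}f(\bm{w}_t)-\nabla f(\bm{w}_t)\big)-\big(\hat{\nabla}f(\bm{u}_t)-\nabla f(\bm{u}_t)\big)$; by $\rho$-Hessian Lipschitzness together with the confinement just established, $\|\bm{R}_t\|\le 100\rho\mathcal{P}\hat{c}$, and by $\hat{\epsilon}$-closeness $\|\bm{\zeta}_t\|\le 2\hat{\epsilon}$. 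By Assumption~\ref{symmetric_hessian} $\mathcal{H}$ is symmetric; let $\bm{e}_1$ be a unit minimum eigenvector, $\mathcal{H}\bm{e}_1=\lambda_1\bm{e}_1$ with $\lambda_1\le-\gamma$, and set $\kappa:=1-\eta\lambda_1\ge 1+\eta\gamma$. Since $\eta=c/l<1/l$ and $l$-smoothness confines the spectrum of $\mathcal{H}$ to $[-l,l]$, $\mathbf{I}-\eta\mathcal{H}$ preserves the line through $\bm{e}_1$ and the hyperplane $\bm{e}_1^{\perp}$, acting as multiplication by $\kappa$ on the former and with operator norm $\le\kappa$ on the latter. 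Decompose $\bm{v}_t=p_t\bm{e}_1+\bm{q}_t$ with $\bm{q}_t\perp\bm{e}_1$; then $p_0=\mu r\ge\tfrac{\hat{\delta}}{2\sqrt{d}}r>0$ and $\bm{q}_0=\bm{0}$. The two smallness facts driving the argument are $\eta\|\bm{R}_t\|\le 100\hat{c}\,\eta\rho\mathcal{P}=\tfrac{100\hat{c}\sqrt{c}}{\chi}\eta\gamma$ and $100\hat{c}\,\eta\rho\mathcal{P}\cdot\hat{c}\mathcal{T}=100\hat{c}^{\,2}\sqrt{c}$ (using $\mathcal{P}=\tfrac{\sqrt{c}}{\chi}\sqrt{\epsilon/\rho}$, $\gamma=\sqrt{\rho\epsilon}$, $\eta\gamma\mathcal{T}=\chi$), both made $\le\tfrac18$ by taking $c_{\max}$ small, the absolute constant $\hat{c}>3$ being fixed first so Lemma~\ref{lemma_ut} applies.

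Next I would prove by simultaneous induction on $t\le T_2$ that $\tfrac12\kappa^t p_0\le p_t\le\tfrac32\kappa^t p_0$ and $\|\bm{q}_t\|\le\tfrac14\kappa^t p_0$ (hence $\|\bm{q}_t\|\le\tfrac12 p_t$ and $\|\bm{v}_t\|\le 2\kappa^t p_0$). Projecting the recursion onto $\bm{e}_1$: $|p_{t+1}-\kappa p_t|\le\eta\|\bm{R}_t\|\|\bm{v}_t\|+2\eta\hat{\epsilon}\le 200\hat{c}\,\eta\rho\mathcal{P}\,\kappa^t p_0+2\eta\hat{\epsilon}$; summing and using $\kappa-1\ge\eta\gamma$ gives $|p_t-\kappa^t p_0|\le\kappa^t\big(200\hat{c}^{\,2}\sqrt{c}\,p_0+\tfrac{2\hat{\epsilon}}{\gamma}\big)$, which is $\le\tfrac12\kappa^t p_0$ provided $c_{\max}$ is small and $\tfrac{2\hat{\epsilon}}{\gamma}\le\tfrac14 p_0$. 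Projecting onto $\bm{e}_1^{\perp}$ and using $\bm{q}_0=\bm{0}$, the same bookkeeping gives $\|\bm{q}_t\|\le\kappa^t\big(200\hat{c}^{\,2}\sqrt{c}\,p_0+\tfrac{2\hat{\epsilon}}{\gamma}\big)\le\tfrac14\kappa^t p_0$. The requirement $\tfrac{2\hat{\epsilon}}{\gamma}\le\tfrac14 p_0$, imposed for the worst case $\mu=\tfrac{\hat{\delta}}{2\sqrt{d}}$, reads $\hat{\epsilon}\lesssim\tfrac{\gamma\hat{\delta}r}{\sqrt{d}}$; substituting $r=\tfrac{\sqrt{c}\,\epsilon}{\chi^2 l}$, $\hat{\delta}=\tfrac{dl}{\gamma}e^{-\chi}$ and $e^{-\chi}=\big(\tfrac{c\epsilon^2\delta}{dl\Delta_f}\big)^{1+\theta/4}$ recovers, up to absolute constants, exactly the bound~\eqref{eps_cube_bound}, which equals $\widetilde{O}\big(\epsilon^{3+\theta/2}/d^{\frac12(1+\theta/2)}\big)$. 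The two halves of the induction feed each other: the bound on $\|\bm{q}_t\|$ keeps the noise term in the $p_t$-recursion small, and the lower bound on $p_t$ keeps the relative bound on $\|\bm{q}_t\|/p_t$ valid. With the induction in hand, $p_{T_2-1}\ge\tfrac12\kappa^{\hat{c}\mathcal{T}-1}p_0\ge e^{\Omega(\hat{c}\chi)}\cdot\tfrac{\sqrt{d}}{2\chi}e^{-\chi}\mathcal{P}$ (using $\kappa\ge 1+\eta\gamma$, $\eta\gamma\mathcal{T}=\chi$ and $p_0\ge\tfrac{\hat{\delta}}{2\sqrt d}r=\tfrac{\sqrt d}{2\chi}e^{-\chi}\mathcal{P}$); choosing $\hat{c}$ a large enough absolute constant makes this exceed $200(\mathcal{P}\hat{c})\ge\|\bm{v}_{T_2-1}\|\ge p_{T_2-1}$ --- a contradiction. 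Hence $T_2<\hat{c}\mathcal{T}$.

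The main obstacle is the simultaneous induction: because $\mathbf{I}-\eta\mathcal{H}$ may expand on $\bm{e}_1^{\perp}$ at essentially the same rate $\kappa$ as along $\bm{e}_1$, there is no spectral gap to keep $\|\bm{q}_t\|$ small relative to $p_t$; the control has to be wrung out of the initial condition $\bm{q}_0=\bm{0}$ together with the fact that the total cross-feeding over $\hat{c}\mathcal{T}$ steps is $O(\hat{c}^{\,2}\sqrt{c})=o(1)$. Simultaneously the additive estimation error $\hat{\epsilon}$ must be propagated through both recursions, and accounting for its amplification over $\hat{c}\mathcal{T}\approx\hat{c}\chi/(\eta\gamma)$ geometric steps is exactly what pins down the magnitude~\eqref{eps_cube_bound} of $\hat{\epsilon}$; making all the interlocking constants ($c_{\max}$, $\hat{c}$, and the fractions $\tfrac12,\tfrac32,\tfrac14$) mutually consistent is the principal bookkeeping burden.
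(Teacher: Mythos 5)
Your proof is correct and follows the same overall strategy as the paper's (couple $\{\bm{u}_t\}$ and $\{\bm{w}_t\}$, derive a linear recursion for $\bm{v}_t=\bm{w}_t-\bm{u}_t$ with a Hessian-error term and a gradient-estimation-error term, and show that the $\bm{e}_1$-component of $\bm{v}_t$ grows geometrically until it violates the confinement bound $\norm{\bm{v}_t}\le 200\hat{c}\mathcal{P}$). The heart of the argument, however, is organized quite differently. The paper decomposes $\bm{v}_t$ into the scalar $\psi_t$ along $\bm{e}_1$ and the scalar $\varphi_t$ along the subspace of eigenvectors with eigenvalue larger than $-\gamma$, and runs a double induction on the two \emph{relative} inequalities $\varphi_t\le 4\sigma t\,\psi_t$ and $2\eta\hat{\epsilon}\le(2-\sqrt 2)\sigma\psi_t$; the geometric growth $\psi_{t+1}\ge(1+\tfrac{\gamma\eta}{2})\psi_t$ then comes out as a corollary. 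You instead use the full orthogonal splitting $\bm{v}_t=p_t\bm{e}_1+\bm{q}_t$, $\bm{q}_t\perp\bm{e}_1$, and track two-sided \emph{multiplicative} bounds $\tfrac12\kappa^tp_0\le p_t\le\tfrac32\kappa^tp_0$ together with $\norm{\bm{q}_t}\le\tfrac14\kappa^tp_0$, proving them by resumming the full history with the factor $\kappa^{t-1-s}$. Your decomposition is cleaner and, arguably, more careful: the paper's $\psi_t^2+\varphi_t^2$ does not account for components in directions with eigenvalue $\le-\gamma$ other than $\bm{e}_1$ (which can exist if $\lambda_{\min}$ is degenerate or there are several eigenvalues below $-\gamma$), whereas your $\bm{e}_1^{\perp}$ covers the whole orthogonal complement, and you correctly observe that the operator norm of $(\mathbf{I}-\eta\mathcal{H})$ on $\bm{e}_1^{\perp}$ is at most $\kappa$ because $\eta l<1$ keeps all $1-\eta\lambda_i$ in $(0,\kappa]$. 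The only imprecision is the claim that your constraint $\hat{\epsilon}\lesssim\gamma\hat{\delta}r/\sqrt d$ ``recovers exactly'' Eq.~\eqref{eps_cube_bound}: the two differ by a factor of order $\sqrt c/\chi$, but since the paper's bound is the tighter one under the standing constraint $\sqrt c(300\hat c+1)\hat c\le\tfrac14$ and $\chi\ge\chi_1$, the hypothesis of the lemma still implies the bound you actually need, and both are $\widetilde O(\epsilon^{3+\theta/2}/d^{(1+\theta/2)/2})$. The final step (taking $\hat c$ a large absolute constant so that $\tfrac12\kappa^{\hat c\mathcal T-1}p_0$ exceeds $200\hat c\mathcal P$) matches the paper's choice of $\hat c$ with $8(2+\log(400\hat c))\le\hat c$, just phrased as a contradiction rather than as a direct upper bound on $T_2$.
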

\begin{proof}
	The proof is provided in Appendix \ref{apdxlem6}. 
\end{proof}

The next result, Lemma \ref{lemma_esc}, combines the two results above to show that  given two gradient descent sequence $\{\bm{u}_t\},\{\bm{w}_t\}$ satisfying the properties given above, at least one of them helps the algorithm decrease the function value greatly.

\begin{lemma} \label{lemma_esc}
	There exist absolute constant $c_{max}$, such that  for any step size $\eta\leq\frac{c_{max}}{l}$, gradient estimation accuracy $\hat{\epsilon}\leq\frac{\sqrt{c^3}}{4\chi}\cdot\epsilon=O(\epsilon)$, and any $T=\frac{\mathcal{T}}{c}$, we have:
	$$\min\{f(\bm{u}_T)-f(\bm{u}_0),f(\bm{w}_T)-f(\bm{w}_0)\}\leq-2.5f_{thres}.$$
\end{lemma}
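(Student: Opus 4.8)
The plan is to produce, from the two coupled estimated-gradient trajectories $\{\bm u_t\}$ and $\{\bm w_t\}$, one trajectory that has already driven the quadratic model $\widetilde f$ down by $4.5 f_{thres}$ while staying inside the localization ball, and then to transfer this into a genuine decrease of $f$ by $2.5 f_{thres}$ via the $\rho$-Hessian-Lipschitz estimate $|f(\bm x)-\widetilde f_{\bm y_0}(\bm x)|\le\frac{\rho}{6}\|\bm x-\bm y_0\|^3$. Two reductions come first: since $\bm w_0=\bm u_0+\mu r\bm e_1$ with $\|\bm u_0-\widetilde{\bm x}\|\le r$ and $\mu\le 1$, we have $\|\bm w_0-\widetilde{\bm x}\|\le 2r$, so Lemma \ref{lemma_ut} is applicable with either $\bm u_0$ or $\bm w_0$ as its base point; and I would shrink $c_{max}$ so that $\hat c\, c_{max}\le 1$, which gives $\hat c\mathcal T\le\mathcal T/c=T$, so every escape time produced below is at most $T$.

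Next I would pick the escaping trajectory by a case split on the quantity $T_1$ of Lemma \ref{lemma_ut} attached to $\bm u_0$. If $T_1<\hat c\mathcal T$, then by definition $\widetilde f_{\bm u_0}(\bm u_{T_1})-f(\bm u_0)\le -4.5 f_{thres}$ while Lemma \ref{lemma_ut} yields $\|\bm u_t-\widetilde{\bm x}\|\le 100(\mathcal P\hat c)$ for $t<T_1$; I set $\bm y=\bm u$, $T'=T_1$. If instead $T_1=\hat c\mathcal T$, then Lemma \ref{lemma_ut} gives $\|\bm u_t-\widetilde{\bm x}\|\le 100(\mathcal P\hat c)$ for all $t<\hat c\mathcal T$, hence for all $t<T_2$ (because $T_2\le\hat c\mathcal T$), so the hypothesis of Lemma \ref{lemma_wt} holds and forces $T_2<\hat c\mathcal T$, i.e. $\widetilde f_{\bm w_0}(\bm w_{T_2})-f(\bm w_0)\le -4.5 f_{thres}$; applying Lemma \ref{lemma_ut} now with base point $\bm w_0$ (legitimate since $\|\bm w_0-\widetilde{\bm x}\|\le 2r$, and its ``$T_1$'' is exactly $T_2$) gives $\|\bm w_t-\widetilde{\bm x}\|\le 100(\mathcal P\hat c)$ for $t<T_2$; I set $\bm y=\bm w$, $T'=T_2$. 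In both cases I obtain a trajectory $\bm y\in\{\bm u,\bm w\}$ and a time $T'<\hat c\mathcal T\le T$ with $\widetilde f_{\bm y_0}(\bm y_{T'})-f(\bm y_0)\le -4.5 f_{thres}$ and $\|\bm y_t-\widetilde{\bm x}\|\le 100(\mathcal P\hat c)$ for $t<T'$.

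Then I would convert the model decrease to a decrease of $f$ at time $T'$ and propagate it to time $T$. Using $l$-smoothness with $\|\nabla f(\widetilde{\bm x})\|\le g_{thres}+\hat\epsilon$ and the localization of $\bm y_{T'-1}$, the single remaining gradient step $\eta\|\hat\nabla f(\bm y_{T'-1})\|$ is of order $\mathcal P\hat c$, so $\|\bm y_{T'}-\bm y_0\|\le\|\bm y_{T'}-\widetilde{\bm x}\|+2r\le 200(\mathcal P\hat c)$ (the contributions $2r$ and $\frac cl g_{thres}$ are lower order than $\mathcal P\hat c$, since $\sqrt{\rho\epsilon}\le l$). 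Hence $f(\bm y_{T'})-f(\bm y_0)\le -4.5 f_{thres}+\frac{\rho}{6}\big(200(\mathcal P\hat c)\big)^3$, and because $\rho\mathcal P^3=\frac{c^{3/2}}{\chi^3}\sqrt{\epsilon^3/\rho}=\sqrt c\, f_{thres}$, a small enough $c_{max}$ keeps the cubic remainder below $1.75 f_{thres}$, giving $f(\bm y_{T'})-f(\bm y_0)\le -2.75 f_{thres}$. For the propagation, the bound obtained through step $(e)$ in the proof of Lemma \ref{lemma_geq} holds at every estimated-gradient step and, after discarding the negative term, gives $f(\bm y_{s+1})\le f(\bm y_s)+\frac{\eta}{2}\hat\epsilon^2$; summing over the at most $T=\mathcal T/c$ steps from $T'$ to $T$ bounds the extra rise by $\frac{\mathcal T\hat\epsilon^2}{2l}$, which is at most $0.25 f_{thres}$ once $\hat\epsilon$ satisfies the restriction required in Lemma \ref{lemma_leq} — this is exactly where the $\widetilde O(\epsilon^{3+\theta/2}/d^{(1+\theta/2)/2})$ bound on $\hat\epsilon$, rather than merely $\hat\epsilon=O(\epsilon)$, is used. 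Combining, $f(\bm y_T)-f(\bm y_0)\le -2.5 f_{thres}$, so $\min\{f(\bm u_T)-f(\bm u_0),\,f(\bm w_T)-f(\bm w_0)\}\le -2.5 f_{thres}$.

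The step I expect to be the main obstacle is the case analysis just described: one must verify that the localization radius $100(\mathcal P\hat c)$, the horizon $\hat c\mathcal T$, and the model-decrease target $4.5 f_{thres}$ are literally the same objects in Lemma \ref{lemma_ut} and Lemma \ref{lemma_wt}, so that the conclusion of one feeds exactly into the hypothesis of the other, and that $T_2$ of Lemma \ref{lemma_wt} really is the ``$T_1$'' produced by Lemma \ref{lemma_ut} run from $\bm w_0$; one also has to invoke $T_2\le\hat c\mathcal T$ in the correct direction. The rest is bookkeeping, but the constants must be apportioned so that the cubic remainder and the $\hat\epsilon$-drift together stay strictly within the $4.5\to 2.5$ budget.
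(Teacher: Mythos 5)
Your proposal is correct and follows essentially the same route as the paper's proof: the same case split on whether the $\bm{u}$-trajectory's model-decrease time falls within $\hat{c}\mathcal{T}$, the same chaining of Lemma \ref{lemma_ut} into Lemma \ref{lemma_wt} in the second case, the same cubic Hessian-Lipschitz transfer from $\widetilde{f}$ to $f$, and the same bound on the drift over the remaining steps up to $T$ (the paper budgets $-4.5\to-4\to-2.5$ with a per-step rise of $\tfrac{3}{2}c\eta g_{thres}^2$, versus your $-4.5\to-2.75\to-2.5$ with $\tfrac{\eta}{2}\hat{\epsilon}^2$; both work). One small misattribution: the drift bound only needs $\hat{\epsilon}=O(\epsilon)$, not the $\widetilde{O}(\epsilon^{3+\theta/2}/d^{(1+\theta/2)/2})$ condition — the latter is consumed inside Lemma \ref{lemma_wt}'s coupling argument, not here.
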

\begin{proof}
	The proof is given in Appendix \ref{apdlem4}. 
\end{proof}

\subsubsection{Proof of Lemma 3}
\begin{proof}
	Given the result  in  Lemma 6, the proof of Lemma 3 follows on the same lines as Lemma 14 in \citep{jin2017escape}. For  completeness, we provide the detailed steps in Appendix \ref{apdlem3}.
\end{proof}

\subsection{Proof of Theorem \ref{thm1}}
	
\if 0
	In this proof, we will actually achieve some point satisfying following condition:
	\begin{equation} \label{E1}
	\norm{\hat{\nabla} f(\bm{x})} \le g_{\text{thres}}, \quad \lambda_{\min}(\nabla^2 f(\bm{x})) \ge - \sqrt{\rho \epsilon}
	\end{equation}	
	where this two condition implies $\bm{x}$ is an $\epsilon$-first-order and $\epsilon$-second-order stationary point. (see Lemma \ref{lemma2})
	\fi 
	
	Choosing $c<\frac{1}{4}$ and starting from $\bm{x}_0$, we consider two cases: 
	\begin{enumerate}
		\item $\norm{\hat{\nabla} f(\bm{x}_0)} > g_{\text{thres}}$: By Lemma \ref{lemma_geq}, we have 
		\begin{equation*}
		f(\bm{x}_{1}) - f(\bm{x}_0) \le  -\frac{\eta}{4} \cdot g_{\text{thres}}^2 = -\frac{c^2}{4\chi^4}\cdot\frac{\epsilon^2}{\ell}
		\end{equation*}	
		\item $\norm{\hat{\nabla} f(\bm{x}_0)} \le g_{\text{thres}}$:
		In this case, Algorithm \ref{PGD-MF} will add a perturbation and check terminal condition after $t_{thres}$ steps. If the condition is not met, with probability at least $1-\hat{\delta}$, we have:
		\begin{equation*}
		f(\bm{x}_{t_{\text{thres}}}) - f(\bm{x}_0) \le -f_{\text{thres}} = -\frac{c}{\chi^3}\cdot\sqrt{\frac{\epsilon^3}{\rho}}
		\end{equation*}
		This means on an average, every step decreases the function value by
		\begin{equation*}
		\frac{f(\bm{x}_{t_{\text{thres}}}) - f(\bm{x}_0)}{t_{\text{thres}}} \le -\frac{c^3}{\chi^4}\cdot\frac{\epsilon^2}{\ell}
		\end{equation*}
	\end{enumerate}
	In Case 1, we can repeat this argument for $t = 1$. In Case 2, we can repeat this argument for $t=t_{thres}+1$.
	Since we choose $c_{max}<\frac{1}{4}$,  the gradient descent will decrease function value in each iteration by at least $\frac{c^3}{\chi^4}\cdot\frac{\epsilon^2}{\ell}$. However, the function value can't be decreased by more than $f(\bm{x}_0) - f^*$, where $f^*$ is the function value of global minima. This means algorithm \ref{PGD-MF} must terminate within the following number of iterations:
	\begin{equation*}
	\begin{aligned}
	\frac{f(\bm{x}_0) - f^*}{\frac{c^3}{\chi^4}\cdot\frac{\epsilon^2}{\ell}}
	&= \frac{\chi^4}{c^3}\cdot \frac{\ell(f(\bm{x}_0) - f^*)}{\epsilon^2} \\
	&= O\left(\frac{\ell(f(\bm{x}_0) - f^*)}{\epsilon^2}\log^{4}\left(\frac{d\ell\Delta_f}{\epsilon^2\delta}\right) \right)
	\end{aligned}
	\end{equation*}
	
	Recall that our choice for $\hat{\epsilon}\leq\widetilde{O}(\frac{\epsilon^{3+\frac{\theta}{2}}}{d^{\frac{1}{2}(1+\frac{\theta}{2})}})$. The number of function evaluations of Algorithm \ref{PGD-MF} as a function of parameters $d$ and $\epsilon$ is given as

	$$O(\frac{1}{\epsilon^2}\log^4\big(\frac{d}{\epsilon^2}\big)\cdot\frac{d}{\hat{\epsilon}^2}\log\frac{1}{\hat\epsilon})=\widetilde{O}(\frac{d}{\epsilon^2\hat{\epsilon}^2})=\widetilde{O}(\frac{d^{2+\frac{\theta}{2}}}{\epsilon^{8+\theta}}).$$

	
	
	Finally, we give the probability of obtaining an $\epsilon$-second order stationary point when the gradient descent algorithm stops. 
	According to Lemma \ref{lemma_geq}, the function value always decreases in case 1. By Lemma \ref{lemma_leq}, we know the function value decreases with probability at least $1-\frac{d\ell}{\sqrt{\rho\epsilon}}e^{-\chi}$ each time the algorithm meets case 2. Besides, we know the number of times we check the terminal condition during the process of gradient descent is at most:
	\begin{equation*}
	\frac{1}{t_{\text{thres}}} \cdot \frac{\chi^4}{c^3}\cdot \frac{\ell(f(\bm{x}_0) - f^*)}{\epsilon^2}
	=\frac{\chi^3}{c}\frac{\sqrt{\rho\epsilon}(f(\bm{x}_0) - f^*)}{\epsilon^2}
	\end{equation*}
	Besides, by Lemma \ref{lemma_GE}, we know the probability of $\Vert\hat{\nabla}-\nabla\Vert\leq\hat{\epsilon}$ is at least $1-\hat{\epsilon}$ each time we make a estimation. And the number of estimation is given by the number of iteration $\frac{\chi^4}{c^3}\cdot\frac{l\Delta_f}{\epsilon^2}$.
	Thus, by union bound, we bound this two probability together to give the final probability of the Algorithm \ref{PGD-MF} giving an $\epsilon$-second order stationary point is at least:
	\begin{equation*}
	\begin{aligned}
	1- &\frac{d\ell}{\sqrt{\rho\epsilon}}e^{-\chi} \cdot \frac{\chi^3}{c}\frac{\sqrt{\rho\epsilon}(f(\bm{x}_0) - f^*)}{\epsilon^2} - \hat{\epsilon}\cdot\frac{\chi^4l\Delta_f}{c^3\epsilon^2}\\
	&= 1 - \frac{\chi^3e^{-\chi}}{c}\cdot  \frac{d\ell(f(\bm{x}_0) - f^*)}{\epsilon^2} -\hat{\epsilon}\cdot\frac{\chi^4l\Delta_f}{c^3\epsilon^2}
	\end{aligned}
	\end{equation*}
	
	Recall our choice of $\chi = \max\{(1+\frac{\theta}{4})\log(\frac{2d\ell\Delta_f}{c\epsilon^2\delta}), \chi_1\}$, where $\theta>0$, we have $\chi_1^3e^{-\chi_1} \le e^{-\chi_1/(1+\frac{\theta}{4})}$, and $\hat{\epsilon}\leq \widetilde{O}(\epsilon^3)$ this gives the probability of the Alforithm not resulting in an $\epsilon$-second order stationary point is at most
	\begin{equation*}
	\begin{aligned}
	\frac{\chi^3e^{-\chi}}{c}&\cdot  \frac{d\ell(f(\bm{x}_0) - f^*)}{\epsilon^2} + \hat{\epsilon}\cdot\frac{\chi^4l\Delta_f}{c^3\epsilon^2}\\
	&\le e^{-\chi/(1+\frac{\theta}{4})}  \frac{d\ell(f(\bm{x}_0) - f^*)}{c\epsilon^2} +\frac{\delta}{2}\le \delta
	\end{aligned}
	\end{equation*}
	which finishes the proof of the Theorem.
	

\if 0

\textbf{Remark 2}: Notice that the lemma \ref{lemma3} has no relationship with the absolute value of $r$ because for any $r>0$, we can always found such two initial points $\bm{u}_0$ and $\bm{w}_0$ which satisfies the requirement. This insight gives the guarantee of the correctness of the whole algorithm. Intuitively thinking, $\bm{x}_{t+1}=\bm{x}_t-\eta\hat{\nabla}f(\bm{x}_t)=\bm{x}-\eta\nabla f(\bm{x}_t)-\eta[\hat{\nabla}f(\bm{x}_t)-\nabla f(\bm{x}_t)]$, where the error resulting from the gradient estimation help us to plus a noise to the potential saddle point. Given the absolute value of the noise (or we say perturbation), $\hat{\epsilon}$ isn't limited. Thus, we can choose a small enough $\hat{\epsilon}$ to ensure the correct direction of gradient descent and meanwhile avoid the saddle point. This idea will be shown in details in the proof of Lemma \ref{lemma3} and Lemma \ref{lemma6}.  

To prove the Lemma \ref{lemma4}, 
\fi

\section{Conclusion}

This paper proposea a Perturbed Estimated Gradient Descent Algorithm with only access to the zeroth-order information of objective function. With only estimated gradient information, we prove the second-order stationary point convergence of the algorithm and provide the convergence rate. This is the first result, to the best of our knowledge, that provides the convergence rate results of gradient descent based method for achieving $\epsilon$-second order stationary point with zeroth-order information. 

In the proposed algorithm, we use a perturbation of the estimated gradient descent, where the perturbation was needed to escape the first order stationary point that is not a second order stationary point. However, it may be possible that the estimation error controlled through Gaussian smoothening alone helps escape saddle points. Whether the additional perturbation in the algorithm can be removed is a topic of future work. 

\appendices

\section{Proof of Lemma \ref{lemma_ut}}\label{apdlem5}
	\begin{proof}
	Without loss of generality, we set $\bm{u}_0=0$ to be the origin, by the update function, we have:
	\begin{equation}
	\label{E3}
	\begin{aligned}
	\bm{u}_{t+1}&=\bm{u}_t-\eta\hat{\nabla}f(\bm{u}_t)\\
	&=\bm{u}_t-\eta\nabla f(\bm{u}_t)-\eta[\hat{\nabla}f(\bm{u}_t)-\nabla f(\bm{u}_t)]\\		
	&=\bm{u}_t-\eta\nabla f(0)-\eta\left[\int_{0}^{1}\nabla^2f(\theta\bm{u}_t)d\theta\right]\bm{u}_t-\eta[\hat{\nabla}f(\bm{u}_t)-\nabla f(\bm{u}_t)]\\
	&=\bm{u}_t-\eta\nabla f(0)-\eta(\mathcal{H}+\Delta_t)\bm{u}_t-\eta[\hat{\nabla}f(\bm{u}_t)-\nabla f(\bm{u}_t)]\\
	&=(\mathbf{I}-\eta\mathcal{H}-\eta\Delta_t)\bm{u}_t-\eta\nabla f(0)-\eta[\hat{\nabla}f(\bm{u}_t)-\nabla f(\bm{u}_t)]\\
	\end{aligned}
	\end{equation}
	where $\Delta_t=\int_{0}^{1}\nabla^2f(\theta\bm{u}_t)d\theta-\mathcal{H}$ can be bounded as:
	\begin{equation}\label{Delta_bound}
		\begin{aligned}
		\Vert\Delta_t\Vert&=\Vert\int_{0}^{1}\nabla^2f(\theta\bm{u}_t)d\theta-\mathcal{H}\Vert\\
		&\leq\int_{0}^{1}\Vert\nabla^2f(\theta\bm{u}_t)-\nabla^2f(\widetilde{\bm{x}})\Vert d\theta\\
		&\leq\int_{0}^{1}\rho\Vert\theta\bm{u}_t-\widetilde{\bm{x}}\Vert d\theta\\
		&\leq\rho\int_{0}^{1}\theta\Vert\bm{u}_t\Vert+\Vert\widetilde{\bm{x}}\Vert d\theta \leq\rho(\Vert\bm{u}_t\Vert+\Vert\widetilde{\bm{x}}\Vert)
		\end{aligned}
	\end{equation}
	Besides, based on $l$-smooth, we have $\Vert\nabla f(0)\Vert\leq\Vert\nabla f(\widetilde{\bm{x}})\Vert+l\Vert\widetilde{\bm{x}}\Vert\leq g_{thres}+2lr=3g_{thres}$.\\
	Now let $\mathcal{S}$ to be the spaced spanned by the eigenvectors of $\mathcal{H}$ whose eigenvalue is less than $-\frac{\gamma}{\hat{c}\chi}$. Let $\mathcal{S}^c$ to be the space spanned by the other eigenvectors. Let $\bm{\alpha}_t$ and $\bm{\beta}_t$ denote the projections of $\bm{u_t}$ onto $\mathcal{S}$ and $\mathcal{S}^c$. According to Eq. \ref{E3}, we have
	\begin{equation}
	\label{beta_t}
	\bm{\beta}_{t+1}=(\mathbf{I}-\eta\mathcal{H})\bm{\beta}_t-\eta\mathcal{P_S}^c\Delta_t\bm{u}_t-\eta\mathcal{P_S}^c\nabla f(0)-\eta\mathcal{P_S}^c[\hat{\nabla}f(\bm{u}_t)-\nabla f(\bm{u}_t)]
	\end{equation}
	By the definition of $T_1$ in lemma \ref{lemma_ut}, for all $t<T_1$
	\begin{equation}
		\label{bound_ut}
		-4.5f_{thres}<\widetilde{f}_0(\bm{u}_t)-f(0)=\nabla f(0)^T\bm{u}_t+\frac{1}{2}\bm{u}_t^T\mathcal{H}\bm{u}_t\leq\nabla f(0)^T\bm{u}_t-\frac{\gamma}{2}\frac{\Vert\bm{\alpha}_t\Vert^2}{\hat{c}\chi}+\bm{\beta}_t\mathcal{H}\bm{\beta}_t
	\end{equation}
	To see the last inequality, we define the orthogonal eigenvectors in $\mathcal{S}$ and $\mathcal{S}^c$ are $\bm{\alpha}^1,\bm{\alpha^2},...,\bm{\alpha^m}$ and $\bm{\beta}^1,\bm{\beta}^2,...,\bm{\beta}^n$, where $d=m+n$. Thus, $\bm{u}_t=\bm{\alpha_t}+\bm{\beta_t}=a_1\bm{\alpha}^1+a_2\bm{\alpha}^2+...+a_m\bm{\alpha}^m+b_1\bm{\beta}^1+b_2\bm{\beta^2}+...+b_n\bm{\beta}^n$, where $a_1,...a_m,b_1,...b_n$ are the linear combination parameter, and the eigenvalues for eigenvectors $\bm{\alpha}^1,...\bm{\alpha}^m\leq-\frac{\gamma}{\hat{c}\chi}$ by the definition of the space $\mathcal{S}$. Thus, we have
	\begin{equation*}
		\begin{aligned}
		\bm{u}_t^T\mathcal{H}\bm{u}_t&=\bm{u}_t^T\mathcal{H}(a_1\bm{\alpha}^1+a_2\bm{\alpha}^2+...+a_m\bm{\alpha}^m+b_1\bm{\beta}^1+b_2\bm{\beta^2}+...+b_n\bm{\beta}^n)\\
		&\leq-\frac{\gamma}{\hat{c}\chi}\bm{u}_t^T(a_1\bm{\alpha}^1+a_2\bm{\alpha}^2+...+a_m\bm{\alpha}^m)+\bm{u}_t^T\mathcal{H}\bm{\beta_t}\\
		&\leq-\frac{\gamma}{\hat{c}\chi}\Vert\bm{\alpha}_t\Vert^2+\bm{\beta}_t^T\mathcal{H}\bm{\beta}_t
		\end{aligned}
	\end{equation*}
	where the last step use the orthogonality of $\bm{\alpha_t}$ and $\bm{\beta}_t$\\
	According to $\Vert\bm{u}_t^2\Vert=\Vert\bm{\alpha}_t^2\Vert+\Vert\bm{\beta}_t^2\Vert$, noticing that $\Vert\nabla f(0)\Vert\leq 3g_{thres}$, combing with Eq. \eqref{bound_ut}, we have,
	\begin{equation*}
	\begin{aligned}
	\Vert\bm{u}_t\Vert^2 &\leq\frac{2\hat{c}\chi}{\gamma}\left(4.5f_{thres}+\nabla f(0)^T\bm{u}_t+\bm{\beta}_t\mathcal{H}\bm{\beta}_t\right)+\Vert\bm{\beta}_t^2\Vert\\
	&\leq17\cdot\max\big\{\frac{g_{thres}\hat{c}\chi}{\gamma}\Vert\bm{u}_t\Vert,\frac{f_{thres}\hat{c}\chi}{\gamma},\frac{\bm{\beta}_t\mathcal{H}\bm{\beta}_t\hat{c}\chi}{\gamma},\Vert\bm{\beta_t}\Vert^2\big\} 
	\end{aligned}
	\end{equation*}
	Which means,
	\begin{equation}\label{ut_bound}
		\begin{aligned}
		\Vert\bm{u}_t\Vert&\leq17\cdot\max\big\{\frac{g_{thres}\hat{c}\chi}{\gamma},\sqrt{\frac{f_{thres}\hat{c}\chi}{\gamma}},\sqrt{\frac{\bm{\beta}_t\mathcal{H}\bm{\beta}_t\hat{c}\chi}{\gamma}},\Vert\bm{\beta_t}\Vert\big\}\\
		&=17\cdot\max\big\{\hat{c}\cdot\mathcal{P},\hat{c}\cdot\mathcal{P},\sqrt{\frac{\bm{\beta}_t\mathcal{H}\bm{\beta}_t\hat{c}\chi}{\gamma}},\Vert\bm{\beta_t}\Vert\big\}
		\end{aligned}
	\end{equation}
	The last equality is due to the definition of $g_{thres}$ and $f_{thres}$. Now, we use induction to prove for all $t<T_1$, we have $\Vert\bm{u}_t\Vert\leq100(\mathcal{P}\cdot\hat{c})$.
	According to the Eq. \eqref{ut_bound}, we only need to use induction on the last two terms.
	When $t=0$, it is obvious due to $\bm{u_0}=0$, suppose the induction holds when $\tau=t<T_1$, we will show that it still holds for $\tau=t+1<T_1$, Let $$\bm{\delta}_t=\mathcal{P_S}^c\left[-\Delta_t\bm{u}_t-\nabla f(0)-(\hat{\nabla}f(\bm{u}_t)-\nabla f(\bm{u}_t)) \right]$$ 
	By Eq. \eqref{beta_t}, define $\kappa=\frac{l}{\gamma}>1$, we have
	\begin{equation}
	\label{delta_dyn}
	\bm{\beta_{t+1}}=(\mathbf{I}-\eta\mathcal{H})\bm{\beta_t}+\eta\bm{\delta_t}
	\end{equation}
	and we can bound $\bm{\delta}_t$ as
	\begin{equation} \label{delta_bound}
		\begin{aligned}
		\Vert\bm{\delta}_t\Vert
		&\leq\Vert\Delta_t\Vert\Vert\bm{u}_t\Vert+\Vert\nabla f(0)\Vert+\Vert\hat{\nabla}f(\bm{u}_t)-\nabla f(\bm{u}_t)\Vert\\
		&\overset{(a)}\leq \rho(\Vert\bm{u}_t\Vert+\Vert\widetilde{\bm{x}}\Vert)\Vert\bm{u}_t\Vert+\Vert\nabla f(0)\Vert+\hat{\epsilon}\\
		&\overset{(b)}\leq\rho\cdot100\hat{c}(100\hat{c}\mathcal{P}+2r)\mathcal{P}+\frac{5}{4}g_{thres}\\
		&=100\hat{c}(100\hat{c}+\frac{2}{\chi k})\rho\mathcal{P}^2+\frac{5}{4}g_{thres}\\
		&\overset{(c)}\leq[100\hat{c}(100\hat{c}+2)\sqrt{c}+\frac{5}{4}]g_{thres}
		\overset{(d)}\leq 1.5g_{thres}
		\end{aligned}
	\end{equation}
	where $(a)$ uses Eq. \eqref{Delta_bound}, $(b)$ uses the induction assumption when $\tau=t$, $\rho\mathcal{P}=\rho(\frac{c}{\chi^2}\cdot\frac{\epsilon}{\rho})=\sqrt{c}(\frac{\sqrt{c}}{\chi^2}\cdot\epsilon)=\sqrt{c}g_{thres}$ gives the step $(c)$.
	By choosing $c_{max}\leq\frac{1}{4}\frac{1}{100\hat{c}(100\hat{c}+2)}$ and step size $c\leq c_{max}$, the last inequality $(d)$ holds.
	\paragraph{Bounding $\norm{\bm{\beta}_{t+1}}$:}
	Combining Eq.\eqref{delta_dyn}, Eq.\eqref{delta_bound} and using the definition of $\mathcal{S}^c$, we have:
	\begin{equation*}
	\norm{\bm{\beta}_{t+1}} \le (1+ \frac{\eta \gamma}{\hat{c}\chi}) \norm{\bm{\beta}_t} + 1.5\eta g_{thres}
	\end{equation*}
	Since $\norm{\bm{\beta}_0} = 0$ and $t+1 \le T_1$, by applying above relation recursively, we have:
	\begin{equation}\label{beta_bound}
	\norm{\bm{\beta}_{t+1}} 
	\le \sum_{\tau = 0}^{t}1.5(1+ \frac{\eta \gamma}{\hat{c}\chi})^\tau\eta g_{thres} 
	\overset{(a)}\le 1.5\cdot 3\cdot T_1\eta g_{thres}
	\overset{(b)}\le 5(\mathcal{P} \cdot \hat{c})
	\end{equation}
	Step $(a)$ holds because $T_1\le \hat{c} \mathcal{T}=\frac{\eta\gamma}{c\chi}$ by definition, so that $(1+ \frac{\eta \gamma}{\hat{c}\chi})^{T_1}\le 3$. And step $(b)$ holds because $T_1\leq\hat{c}\mathcal{T}\eta g_{thres}=\hat{c}\frac{\chi}{\eta\gamma}\eta\frac{\sqrt{c}}{\chi^2}\epsilon=\hat{c}\frac{\sqrt{c}}{\chi}\sqrt{\frac{\epsilon}{\rho}}=\hat{c}\mathcal{P}$
	
	\paragraph{Bounding $\bm{\beta}_{t+1}\trans\mathcal{H}\bm{\beta}_{t+1}$:} Using Eq.\eqref{delta_dyn}, we can also write the update equation as:
	\begin{equation*}
	\begin{aligned}
	\bm{\beta_t} &= (\mathbf{I}-\eta\mathcal{H})\bm{\beta_{t-1}}+\eta\bm{\delta_{t-1}}\\
	&=(\mathbf{I}-\eta\mathcal{H})[(\mathbf{I}-\eta\mathcal{H})\bm{\beta_{t-2}}+\eta\bm{\delta_{t-2}}]+\eta\bm{\delta_{t-2}}\\
	&=(\mathbf{I}-\eta\mathcal{H})^2\bm{\beta_{t-2}}+(\mathbf{I}-\eta\mathcal{H})\eta\bm{\delta_{t-2}}+\eta\bm{\delta_{t-1}}\\
	&=...\\
	&=\sum_{\tau=0}^{t-1}(\mathbf{I}-\eta\mathcal{H})^\tau\eta\bm{\delta_{t-\tau-1}}
	\end{aligned}
	\end{equation*}
	Combining with Eq.\eqref{delta_bound}, this gives
	\begin{equation}\label{beta2_bound}
		\begin{aligned}
		\bm{\beta}_{t+1}\trans \mathcal{H} \bm{\beta}_{t+1} =& \eta^2\sum_{\tau_1 = 0}^t \sum_{\tau_2 = 0}^t 
		\bm{\delta_{t-\tau_1}}\trans (\mathbf{I} - \eta \mathcal{H})^{\tau_1}\mathcal{H}(\mathbf{I} - \eta \mathcal{H})^{\tau_2}\bm{\delta_{t-\tau_2}} \\
		\le &\eta^2\sum_{\tau_1 = 0}^t \sum_{\tau_2 = 0}^t \norm{\bm{\delta_{t-\tau_1}}}
		\norm{(\mathbf{I} - \eta \mathcal{H})^{\tau_1}\mathcal{H}(\mathbf{I} - \eta \mathcal{H})^{\tau_2}}\norm{\bm{\delta_{t-\tau_2}}} \\
		\le& 4\eta^2 g_{thres}^2\sum_{\tau_1 = 0}^t \sum_{\tau_2 = 0}^t  \norm{(\mathbf{I} - \eta \mathcal{H})^{\tau_1}\mathcal{H}(\mathbf{I} - \eta \mathcal{H})^{\tau_2}}
		\end{aligned}
	\end{equation}
	Let the eigenvalues of $\mathcal{H}$ to be $\{\lambda_i\}$, then for any $\tau_1, \tau_2 \ge 0$, we know the eigenvalues of 
	$(\mathbf{I} - \eta \mathcal{H})^{\tau_1}\mathcal{H}(\mathbf{I} - \eta \mathcal{H})^{\tau_2}$ are $\{\lambda_i(1-\eta \lambda_i)^{\tau_1 + \tau_2}\}$.
	Let $g_t(\lambda):=\lambda (1-\eta \lambda)^t$, and setting its derivative to zero, we obtain:
	\begin{equation*}
	g_t(\lambda)' = (1-\eta\lambda)^t -t\eta\lambda(1-\eta\lambda)^{t-1} = 0
	\end{equation*}
	Because $l$ is the largest eigenvalue of Hessian, we must have $\lambda\leq l=\frac{c}{\eta}\leq\frac{1}{\eta}$. Thus,
	we see that $\lambda_t^\star = \frac{1}{(1+t)\eta}$ is the unique maximizer, and $g_t(\lambda)$ is monotonically increasing in $(-\infty, \lambda_t^\star]$. This gives:
	\begin{equation*}
	\norm{(\mathbf{I} - \eta \mathcal{H})^{\tau_1}\mathcal{H}(\mathbf{I} - \eta \mathcal{H})^{\tau_2}}
	= \max_i \lambda_i(1-\eta \lambda_i)^{\tau_1 + \tau_2}
	\le \hat{\lambda}(1-\eta\hat{\lambda})^{\tau_1 + \tau_2} \le \frac{1}{(1+\tau_1+\tau_2)\eta}
	\end{equation*}
	where $\hat{\lambda} = \min\{l, \lambda_{\tau_1 + \tau_2}^\star\}$. Using this equation in Eq. \eqref{beta2_bound}, we have
	\begin{equation}\label{beta2_bound2}
	\begin{aligned}
	\bm{\beta}_{t+1}\trans \mathcal{H} \bm{\beta}_{t+1} 
	&\le 4\eta^2 g_{thres}^2\sum_{\tau_1 = 0}^t \sum_{\tau_2 = 0}^t  \norm{(\mathbf{I} - \eta \mathcal{H})^{\tau_1}\mathcal{H}(\mathbf{I} - \eta \mathcal{H})^{\tau_2}}  \\
	&\le 4\eta g_{thres}^2\sum_{\tau_1 = 0}^t \sum_{\tau_2 = 0}^t \frac{1}{1+\tau_1+\tau_2}\\
	&\overset{(a)}\le 8\eta T_1 g_{thres}^2 
	\overset{(b)}\le 8\eta\hat{c}\mathcal{T}g_{thres}^2
	\overset{(c)}= 8 \mathcal{P}^2 \gamma \hat{c} \cdot \chi^{-1} 
	\end{aligned}
	\end{equation}
	where step $(a)$ holds by rearranging the summation as follows:
	\begin{equation*}
	\sum_{\tau_1 = 0}^t \sum_{\tau_2 = 0}^t \frac{1}{1+\tau_1+\tau_2}
	= \sum_{\tau = 0}^{2t} \min\{1+\tau, 2t+1-\tau\} \cdot \frac{1}{1+\tau} \le 2t+1 < 2T_1
	\end{equation*}
	and step $(b)$ use the definition $T_1\leq\hat{c}\mathcal{T}$ and $\eta\mathcal{T}g_{thres}^2=\eta\frac{\chi}{\eta\gamma}\frac{c}{\chi^4}\epsilon^2=\frac{c\epsilon^2}{\gamma\chi^3}=(\frac{c}{\chi^2}\frac{\epsilon}{\rho})\gamma\chi^{-1}=\mathcal{P}^2\gamma\chi^{-1}$ give the result of step $(c)$ 
	~
	
	Finally, substituting Eq. \eqref{beta_bound} and Eq. \eqref{beta2_bound2} into Eq.\eqref{ut_bound}, we have
	\begin{align*}
	\norm{\bm{u}_{t+1}}  \le& 17\cdot\max\big\{\hat{c}\cdot\mathcal{P},\hat{c}\cdot\mathcal{P},\sqrt{\frac{\bm{\beta}_t\mathcal{H}\bm{\beta}_t\hat{c}\chi}{\gamma}},\Vert\bm{\beta_t}\Vert\big\}\\
	\le & 100 (\mathcal{P} \cdot \hat{c})
	\end{align*}
	This finishes the induction as well as the proof of the lemma. \hfill 
\end{proof}

\section{Proof of Lemma \ref{lemma_wt}}\label{apdxlem6}

\begin{proof}	
	
	In this lemma, we will show that if sequence $\bm{u}_t$ is inside a small ball, then the sequence $\bm{w}_t$ can escape the stuck region. To see this, we focus on the difference of these two sequence in direction $\bm{e}_1$. We will prove that the different in $\bm{e}_1$ direction is increase as power series with base larger than 1. In this sense, it won't take long to get sequence $\bm{w}_t$ escaping the stuck region.\\
	W.L.O.G, set $\bm{u}_0 = 0$ to be the origin. Define $\bm{v}_t = \bm{w}_t - \bm{u}_t$, by assumptions in Lemma \ref{lemma_leq}, we have $\bm{v}_0 = \mu r\bm{e}_1, ~\mu \in [\hat{\delta}/(2\sqrt{d}), 1]$. Now, consider the update equation for $\bm{w}_t$:
	\begin{align*}
	\bm{u_{t+1}+\bm{v}_{t+1}}=\bm{w}_{t+1}&=\bm{w}_t-\eta\hat{\nabla}f(\bm{w}_t)\\
	&=\bm{u}_t+\bm{v}_t-\eta\nabla f(\bm{u}_t+\bm{v}_t)+\eta(\hat{\nabla}f(\bm{w}_t)-\nabla f(\bm{w}_t))\\
	&=\bm{u}_t+\bm{v}_t-\eta\nabla f(\bm{u}_t)-\eta\big[\int_{0}^{1}\nabla^2f(\bm{u}_t+\theta\bm{v}_t)d\theta\big]\bm{v}_t+\eta(\hat{\nabla}f(\bm{w}_t)-\nabla f(\bm{w}_t))\\
	&=\bm{u}_t+\bm{v}_t-\eta\nabla f(\bm{u}_t)-\eta(\mathcal{H}+\Delta_t^{'})\bm{v}_t+\eta(\hat{\nabla}f(\bm{w}_t)-\nabla f(\bm{w}_t))\\
	&=\bm{u}_t-\eta\nabla f(\bm{u}_t)+(\mathbf{I}-\eta\mathcal{H}-\eta\Delta_t^{'})\bm{v}_t+\eta(\hat{\nabla}f(\bm{w}_t)-\nabla f(\bm{w}_t))\\
	&=\bm{u}_{t+1}+(\mathbf{I}-\eta\mathcal{H}-\eta\Delta_t^{'})\bm{v}_t+\eta(\hat{\nabla}f(\bm{w}_t)-\nabla f(\bm{w}_t))+\eta(\hat{\nabla}f(\bm{u}_t)-\nabla f(\bm{u}_t))
	\end{align*}
	where $\Delta'_t := \int_{0}^1 \nabla^2 f(\bm{u}_t + \theta\bm{v}_t)d\theta - \mathcal{H}$. By Hessian Lipschitz, similar to Lemma \ref{lemma_ut}, we have $\norm{\Delta'_t} \le \rho(\norm{\bm{u}_t} + \norm{\bm{v}_t}+ \norm{\widetilde{\bm{x}}})$. 
	Thus, $\bm{v}_t$ satisfies
	\begin{equation}\label{v_t_dyn}
	\bm{v}_{t+1} = (\mathbf{I} - \eta \mathcal{H} - \eta \Delta'_t) \bm{v}_t+\eta(\hat{\nabla}f(\bm{w}_t)-\nabla f(\bm{w}_t))+\eta(\hat{\nabla}f(\bm{u}_t)-\nabla f(\bm{u}_t))
	\end{equation}
	
	Since $\Vert\bm{w}_0-\widetilde{\bm{x}}\Vert=\Vert\bm{u}_0-\widetilde{\bm{x}}+\bm{v}_0\Vert\leq\Vert\bm{u}_0-\widetilde{\bm{x}}\Vert+\Vert\bm{v}_0\Vert\leq2r$ by definition of $\bm{u}_0$, directly applying Lemma \ref{lemma_ut}, we obtain $\bm{w}_t\le 100(\mathcal{P}\cdot\hat{c})$ for all $t \le T_2$. By condition of Lemma \ref{lemma_wt}, we obtain $\norm{\bm{u}_t} \le 100(\mathcal{P}\cdot\hat{c})$ for all $t<T_2$.
	This gives:
	\begin{equation} \label{vt_bound}
	\norm{\bm{v}_t} \le \norm{\bm{u}_t} + \norm{\bm{w}_t} \le 200(\mathcal{P}\cdot\hat{c}) \text{~for all~} t<T_2
	\end{equation}
	Thus, for $t<T_2$, we have:
	\begin{equation*}
	\norm{\Delta'_t} \le \rho( \norm{\bm{u}_t} + \norm{\bm{v}_t}+ \norm{\widetilde{\bm{x}}})
	\le \rho(300\mathcal{P}\cdot\hat{c}+r)
	=\rho\mathcal{P}(300\hat{c}+\frac{1}{\chi\kappa})
	\le \rho\mathcal{P}(300\hat{c}+1)
	\end{equation*}

	Denote $\psi_t\geq0$ as the norm of $\bm{v}_t$ projected onto $\bm{e}_1$ direction, and let $\varphi_t\geq0$ be the norm of $\bm{v}_t$ projected onto the subspace spanned by eigenvectors whose eigenvalue larger than $-\gamma$. Eq. \eqref{v_t_dyn} gives:
	\begin{align*}
	\psi_{t+1} \ge& (1+\gamma \eta)\psi_t -\sigma\sqrt{\psi_t^2 + \varphi_t^2}-2\eta\hat{\epsilon}\\
	\varphi_{t+1} \le &(1+\gamma\eta)\varphi_t + \sigma\sqrt{\psi_t^2 + \varphi_t^2}+2\eta\hat{\epsilon}
	\end{align*}
	where $\sigma = \eta\rho\mathcal{P}(300\hat{c} + 1)$. 
	Noticing that, by choosing $\sqrt{c_{\max}}\le \frac{1}{300\hat{c}+1}\min\{\frac{1}{4}, \frac{1}{4\hat{c}}\}$, and $c\leq c_{max}$, we have for all $t+1<T_2$
	\begin{equation}\label{sigmat_bound}
	4\sigma (t+1) \le 4\sigma T_2 \le 
	4\eta\rho\mathcal{P}(300\hat{c} + 1)\hat{c}\mathcal{T} =4\sqrt{c}(300 \hat{c} + 1)\hat{c}\le 1
	\end{equation}
	Besides, according to the assumption, we have:
	$$\hat{\epsilon}\leq\frac{4-2\sqrt{2}}{4}\frac{c\sqrt{\epsilon^3\rho}}{\chi^3l}\frac{\hat{\delta}}{2\sqrt{d}}(300\hat{c}+1)=\widetilde{O}(\frac{\epsilon^{3+\frac{\theta}{2}}}{d^{\frac{1}{2}(1+\frac{\theta}{2})}})$$
	The is because  $\sqrt{\epsilon^3}\frac{\hat{\delta}}{2\sqrt{d}}=\frac{\sqrt{d}l\epsilon}{\sqrt{\rho}}e^{-\chi}=\frac{\sqrt{d}l\epsilon}{\sqrt{\rho}}\min\{(\frac{c\epsilon^2\delta}{2dl\Delta_f})^{1+\frac{\theta}{4}},e^{-\chi_1}\}=O(\frac{\epsilon^{3+\frac{\theta}{2}}}{d^{\frac{1}{2}(1+\frac{\theta}{2})}})$.
	Also notice that we use the notation $\widetilde{O}$ to hide the $\log(\cdot)$ term coming from $\chi$. By this definition, we have for all $t<T_2$:
	\begin{equation}\label{E15}
		\begin{aligned}
		2\eta\hat{\epsilon}
		&\leq(2-\sqrt{2})\eta\frac{\hat{\delta}}{2\sqrt{d}}\frac{c\sqrt{\epsilon^3\rho}}{\chi^3l}(300\hat{c}+1)\\
		&\overset{(a)}\leq(2-\sqrt{2})\eta\mu r\cdot\frac{\sqrt{c\rho\epsilon}}{\chi}(300\hat{c}+1)\\
		&=(2-\sqrt{2})\mu r\mathcal{P}\rho\eta(300\hat{c}+1)\\
		&=(2-\sqrt{2})\sigma\psi_0
		\end{aligned}			
	\end{equation}
	Where step $(a)$ comes from definition of $\mu$ and $r$. \\
	We will now prove via double induction that for pairs $(t_1,t_2)$, $t_1<T_2$, $t_2<T_2$:
	\begin{equation}\label{E12}
	\varphi_{t_1} \le 4 \sigma_{t_1} \cdot \psi_{t_1}\quad and\quad 2\eta\hat{\epsilon}\leq(2-\sqrt{2})\sigma \psi_{t_2}
	\end{equation}
	By hypothesis of Lemma \ref{lemma_esc}, $\varphi_0 = 0$ and choosing $t=0$ in Eq. \eqref{E15}, we know the base case of induction holds. Assume Eq. \eqref{E12} is true for $(\tau_1,\tau_2)$, where $\tau_1=\tau_2=t\leq T_2$, For $(\tau_1+1,\tau_2+1)=(t+1,t+1)$, $t+1\le T_2$, we have:
	\begin{align*}
	4\sigma(t+1)\psi_{t+1} 
	\ge & 4\sigma (t+1) \left( (1+\gamma \eta)\psi_{t} - \sigma \sqrt{\psi_{t}^2 + \varphi_{t}^2}-2\eta\hat{\epsilon}\right) \\
	\varphi_{t+1} \le &4 \sigma  t(1+\gamma\eta) \psi_{t} + \sigma \sqrt{\psi_{t}^2 + \varphi_{t}^2}+2\eta\hat{\epsilon}
	\end{align*} 
	To derive the first equation, we multiply $4\sigma(t+1)$ on both sides and to get the second equation, we use induction when $\tau_1=t$.\\
	Based on induction when $\tau_1=t$ and Eq. \eqref{sigmat_bound}, we know that $\varphi_{t}\leq4\sigma t\cdot\psi_{t}\leq\psi_{t}$. To finish the induction, we only need to show:
	$$	4 \sigma  t(1+\gamma\eta) \psi_{t} + \sigma \sqrt{\psi_{t}^2 + \varphi_{t}^2}+2\eta\hat{\epsilon}\leq4\sigma (t+1) \left( (1+\gamma \eta)\psi_{t} - \sigma \sqrt{\psi_{t}^2 + \varphi_{t}^2}-2\eta\hat{\epsilon}\right)$$
	Which means we only need to show
	\begin{equation*}
		\left(1+4\sigma (t+1)\right)[\sigma\sqrt{\psi_{t}^2 + \varphi_{t}^2}+2\eta\hat{\epsilon}]
	\le 4 (1+\gamma \eta)\sigma\psi_{t}
	\end{equation*}
	Recall that $\varphi_t \le 4  \mu t \cdot \psi_t \le \psi_t$, combine with Eq. \eqref{sigmat_bound} and use the induction assumption when $\tau_2=t$, we have
	\begin{align*}
	\left(1+4\sigma (t+1)\right)[\sigma\sqrt{\psi_{t}^2 + \varphi_{t}^2}+2\eta\hat{\epsilon}]
	&\leq\left(1+4\sigma (t+1)\right)\big[\sigma\sqrt{2\psi_{t}^2}+2\eta\hat{\epsilon}\big]\\
	&\leq 2\sqrt{2}\sigma\psi_{t}+(4-2\sqrt{2})\sigma\psi_{t}\\
	&=4\sigma\psi_{t}<4(1+\gamma\eta)\sigma\psi_{t}
	\end{align*}
	which finishes the proof for $\tau_1=t+1$.\\
	Recall that $\varphi_t \le 4  \mu t \cdot \psi_t \le \psi_t$, again use the induction assumption when $\tau_2=t$, we have
	\begin{equation}\label{power_increase}
	\psi_{t+1} \ge (1+\gamma \eta)\psi_t - \sqrt{2}\sigma\psi_t -(2-\sqrt{2})\sigma\psi_t=(1+\gamma\eta)\psi_t-2\sigma\psi_t
	\ge (1+\frac{\gamma \eta}{2})\psi_t 
	\end{equation}
	where the last step follows from $\sigma = \eta\rho\mathcal{P}(300\hat{c}+ 1) \le  \sqrt{c_{\max}}(300\hat{c} + 1) \gamma \eta \cdot\chi^{-1} < \frac{\gamma \eta}{4}$.\\
	This mean $\psi_{t+1}\geq\psi_t$. Combing with Eq. \eqref{E15}, we finish the proof for $\tau_2=t+1$. Thus, we finish the whole double induction.
	
	Finally, combining Eq. \eqref{vt_bound} and \eqref{power_increase},  we have for all $t<T_2$:
	\begin{align*}
	200(\mathcal{P}\cdot\hat{c})
	\ge &\norm{\bm{v}_t} \ge \psi_t \ge (1+\frac{\gamma \eta}{2})^t \psi_0
	\ge (1+\frac{\gamma \eta}{2})^t \frac{\delta}{2\sqrt{d}}r
	= (1+\frac{\gamma \eta}{2})^t \frac{\hat{\delta}}{2\sqrt{d}}\frac{\mathcal{P}}{\kappa\chi}
	\end{align*}
	Noticing that $\frac{\eta\gamma}{2}=\frac{c\gamma}{2l}=\frac{c}{2k}<1$, and we have for $x\in(0,1)$, $\log(1+x)>\frac{x}{2}$. Choosing $t=\frac{T_2}{2}<T_2$ in above equation, this implies:
	\begin{equation*}
	\begin{aligned}
	T_2 &< 2\frac{\log(400\frac{k\sqrt{d}}{\hat{\delta}}\cdot\hat{c}\chi)}{\log(1+\frac{\eta\gamma}{2})} <  8\frac{\log(400\frac{kd}{\hat{\delta}}\cdot\hat{c}\chi)}{\eta\gamma}=8\frac{\log(400\hat{c})+\log(\chi)+\log(\frac{\kappa d}{\hat{\delta}})}{\eta\gamma}\\
	&\overset{(a)}=8\frac{\log(400\hat{c})+\log(\chi)+\chi}{\eta\gamma}
	\le 8\frac{\log(400\hat{c})\chi+\chi+\chi}{\eta\gamma}
	=8(2+\log(400\hat{c}))\frac{\chi}{\eta\gamma}
	=8(2+\log(400\hat{c}))\mathcal{T}
	\end{aligned}
	\end{equation*}
	Notice that $\log(\frac{d\kappa}{\hat{\delta}})=\log(\frac{d\kappa\sqrt{\rho\epsilon}}{dl}e^{\chi})=\log(e^{\chi})=\chi$. By choosing constant $\hat{c}$ to be large enough to satisfy $8(2 + \log (400 \hat{c})) \le \hat{c}$, we will have
	$T_2 < \hat{c}\mathcal{T} $, which finishes the proof.
\end{proof}

\section{Proof of Lemma \ref{lemma_esc}}\label{apdlem4}

\begin{proof}
	W.L.O.G, let $\widetilde{\bm{x}} = 0$ be the origin. Let $(c^{(2)}_{\max}, \hat{c})$ be the absolute constant so that Lemma \ref{lemma_wt} holds, also let $c^{(1)}_{\max}$ be the absolute constant to make Lemma \ref{lemma_ut} holds based on our current choice of $\hat{c}$.
	We choose $c_{\max} \le \min\{c^{(1)}_{\max}, c^{(2)}_{\max}\}$ so that our learning rate $\eta \le c_{\max}/\ell$ is small enough which makes both Lemma \ref{lemma_ut} and Lemma \ref{lemma_wt} hold. Let $T^{*}:=\hat{c}\mathcal{T}$ and define:
	\begin{equation*}
	T' = \inf_t\left\{t| \widetilde{f}_{\bm{u}_0}(\bm{u}_t)-f(\bm{u}_0)\le-4.5f_{thres} \right\}
	\end{equation*}
	Let's consider following two cases:
	
	\paragraph{Case $T' \le T^{*}$:} In this case, by Lemma \ref{lemma_ut}, we know $\norm{\bm{u}_{T'-1}} \le O(\mathcal{P})$. Using $l$-smooth, we have
	\begin{equation*}
		\begin{aligned}
		\Vert\bm{u}_{T'}\Vert
		&\overset{(a)}\leq\Vert\bm{u}_{T'-1}\Vert+\eta\Vert\hat{\nabla}f(\bm{u}_{T'-1})\Vert
		\leq\Vert\bm{u}_{T'-1}\Vert+\eta\Vert\nabla f(\bm{u}_{T'-1})\Vert+\eta\Vert\hat{\nabla}f(\bm{u}_{T'-1})-\nabla f(\bm{u}_{T'-1})\Vert\\
		&\overset{(b)}\leq\Vert\bm{u}_{T'-1}\Vert+\eta\Vert\nabla f(\widetilde{\bm{x}})\Vert+\eta l\Vert\bm{u}_{T'-1}\Vert+\eta\hat{\epsilon}
		\overset{(c)}\leq 2(\Vert\bm{u}_{T'-1}\Vert+\eta g_{thres})
		\overset{(d)}\leq O(\mathcal{P})
		\end{aligned}
	\end{equation*}
	where $(a)$ comes from the gradient descent step in Algorithm \ref{PGD-MF}, $(b)$ uses the $l$-smooth property, $(c)$ follows the definition of $\widetilde{\bm{x}}$ and $\hat{\epsilon}$ and $\eta g_{thres}\leq\frac{\sqrt{c}}{\chi^2}\cdot\frac{\epsilon}{l}=\frac{\sqrt{\epsilon\rho}}{l\chi}(\frac{\sqrt{c}}{\chi}\sqrt{\frac{\epsilon}{\rho}})=\frac{1}{\chi\kappa}\mathcal{P}\leq\mathcal{P}$ gives the inequality $(e)$.\\
	Using this, we can the function decrease greatly from $\bm{u}_0$ to $\bm{u}_{T'}$
	\begin{equation*}
		\begin{aligned}
		f(\bm{u}_{T'}) - f(\bm{u}_0) 
		&\overset{(a)}\le \nabla f(\bm{u}_0)^T(\bm{u}_{T'}-\bm{u}_0) + \frac{1}{2}(\bm{u}_{T'}-\bm{u}_0)\trans \nabla^2 f(\bm{u}_0) (\bm{u}_{T'}-\bm{u}_0)
		+ \frac{\rho}{6} \norm{\bm{u}_{T'}-\bm{u}_0}^3 \\
		&\overset{(b)}=\widetilde{f}_{\bm{u}_0}(\bm{u}_{T'})-f(\bm{u}_0)+\frac{1}{2}(\bm{u}_{T'}-\bm{u}_0)\trans [\nabla^2f(\bm{u}_0)-\nabla^2f(\widetilde{\bm{x}})](\bm{u}_{T'}-\bm{u}_0)+ \frac{\rho}{6} \norm{\bm{u}_{T'}-\bm{u}_0}^3\\
		&\overset{(c)}\le \widetilde{f}_{\bm{u}_0}(\bm{u}_{T'})-f(\bm{u}_0)+\frac{\rho}{2}\Vert\bm{u}_0-\widetilde{\bm{x}}\Vert\Vert\bm{u}_{T'}-\bm{u}_0\Vert^2 + \frac{\rho}{6} \norm{\bm{u}_{T'}-\bm{u}_0}^3 \\
		&\overset{(d)}\le-4.5f_{thres} + O(\rho\mathcal{P}^3) \overset{(e)}= -4.5f_{thres} + O(\sqrt{c}\cdot f_{thres}) \overset{(f)}\le -4f_{thres}
		\end{aligned}
	\end{equation*}
	where $(a)$ and $(c)$ directly use $\rho$-Hessian Lipschitz, $(b)$ comes from the definition of $\widetilde{f}_{\bm{u}_0}(\bm{u}_{T'})$, $(d)$ follows the Lemma \ref{lemma_ut} and $\rho\mathcal{P}^3=\frac{(c\epsilon)^{1.5}}{\chi^3\sqrt{\rho}}=\sqrt{c}\frac{c}{\chi^3}\sqrt{\frac{\epsilon^3}{\rho}}=\sqrt{c}f_{thres}$ give the inequality $(e)$. Finally, by choosing $c$ small enough, the inequality $(f)$ holds.\\
	Now, we are going to bound the increase of function from step $T'$ to $T$. Because when $\Vert\hat{\nabla}f(\bm{x}_t)\Vert>g_{thres}$, the function value will decrease by Lemma \ref{lemma_leq}. Thus, we only consider the condition that $\Vert\hat{\nabla}f(\bm{x}_t)\Vert\leq g_{thres}$. According to Eq. \eqref{Decrease} step (c) in Lemma \ref{lemma_geq}, by choosing $\hat{\epsilon}\leq cg_{thres}=O(\epsilon)$, we have
	\begin{equation}\label{Decrease2}
	\begin{aligned}
	f(\bm{u}_{t+1}) - f(\bm{u}_t) &\le \eta\Vert\nabla f(\bm{u}_t)-\hat{\nabla}f(\bm{u}_t)\Vert\Vert\hat{\nabla}f(\bm{u}_t)\Vert+\frac{l\eta^2}{2}\Vert\hat{\nabla} f(\bm{u}_t)\Vert^2-\eta\Vert\hat{\nabla} f(\bm{u}_t)\Vert^2\\
	&\overset{(a)}\le \eta\hat{\epsilon}\Vert\hat{\nabla} f(\bm{u}_t)\Vert+\frac{c\eta}{2}\Vert\hat{\nabla} f(\bm{u}_t)\Vert^2\\
	&\le \eta cg_{thres}^2+\frac{c\eta}{2}g_{thres}^2=\frac{3}{2}c\eta g_{thres}^2 
	\end{aligned}
	\end{equation}
	where we omit the non-positive term in step $(a)$.\\ 
	Choosing $c_{\max} \le \min \{1, \frac{1}{\hat{c}}\}$. We know $T=\frac{\mathcal{T}}{c}\geq\frac{\mathcal{T}}{c_{max}}\geq\hat{c}\mathcal{T}=T^*\geq T'>0$. Thus, the number of steps between $T$ and $T'$ are at most $\frac{\mathcal{T}}{c}$. Therefore, during these steps, the function value can at most increase:
	\begin{equation}
	\begin{aligned}
	f(\bm{u}_T)-f(\bm{u}_{T'})\leq\big(f(\bm{u}_t) - f(\bm{u}_{t+1})\big)\frac{\mathcal{T}}{c}
	&\leq \frac{3}{2}c\eta g_{thres}^2\frac{\chi}{c\eta\gamma}
	=\frac{3}{2}\frac{c}{\chi^4}\epsilon^2\frac{\chi}{\sqrt{\rho\epsilon}}\\
	&\leq\frac{3c}{2\chi^3}\cdot\sqrt{\frac{\epsilon^3}{\rho}}=1.5f_{thres}\\
	\end{aligned}
	\end{equation}
	Thus, we have:
	\begin{equation*}
	f(\bm{u}_T) - f(\bm{u}_0) = [f(\bm{u}_T) - f(\bm{u}_{T'})] + [f(\bm{u}_{T'}) - f(\bm{u}_0)] \le 1.5f_{thres} - 4f_{thres}= -2.5f_{thres}
	\end{equation*}
	
	\paragraph{Case $T' > T^*$:} In this case, by Lemma \ref{lemma_ut}, we know $\norm{\bm{u}_t}\le O(\mathcal{P})$ for all $t\le T^*$. Define 
	\begin{equation*}
	T'' = \inf_t\left\{t| \tilde{f}_{\bm{w}_0}(\bm{w}_t) - f(\bm{w}_0)  \le -4.5f_{thres} \right\}
	\end{equation*}
	Noticing that $\Vert\bm{w}_0-\widetilde{\bm{x}}\Vert\leq\Vert\bm{u}_0+\mu r\bm{e}_1\Vert\leq2r$. By Lemma \ref{lemma_ut}, we have for $t<T_2$, $\Vert \bm{w}_t-\widetilde{\bm{x}}\Vert\leq100(\hat{c}\cdot\mathcal{P})$, which is exactly the condition in Lemma \ref{lemma_wt}. Thus, by Lemma \ref{lemma_wt}, we immediately have $T'' \le T^*$. Applying same argument as in first case (replacing notation $\bm{u}$ with $\bm{w}$), we have for $T=t_{thres}=\frac{\mathcal{T}}{c}$ that $f(\bm{w}_T) - f(\bm{w}_0) \le -2.5f_{thres}$.
\end{proof}

\section{Proof of Lemma \ref{lemma_leq}}\label{apdlem3}
Using $l$-smooth, by adding a perturbation, we know the function value at most increase by
\begin{equation}\label{Decrease2}
	\begin{aligned}
	f(\bm{x}_0) - f(\widetilde{\bm{x}}) 
	\le \nabla f(\widetilde{\bm{x}})^T\bm{\xi}+\frac{l}{2}\Vert\bm{\xi}\Vert^2
	\le g_{thres}r+\frac{lr^2}{2}
	\le \frac{3}{2}f_{thres}
	\end{aligned}
\end{equation}
Where $(a)$ holds due to $\hat{\epsilon}$ close gradient estimation, $\eta=\frac{c}{l}$, and omitting the last non-positive term

By applying Lemma \ref{lemma_esc}, we know for any $\bm{x}_0\in \cXs$, it is guaranteed that $(\bm{x}_{0} \pm \mu r \bm{e}_1) \not \in \cXs $, where $\mu \in [\frac{\hat{\delta}}{2\sqrt{d}}, 1]$. Let $I_{\cXs}(\cdot)$ be the indicator function of being inside set $\cXs$; and vector $\bm{x} = (x^{(1)}, \bm{x}^{(-1)})$, where $x^{(1)}$ is the component along $\bm{e}_1$ direction, and $\bm{x}^{(-1)}$ is the remaining $d-1$ dimensional vector. Define $\mathbb{B}^{(d)}(r)$ be $d$-dimensional ball with radius $r$. We obtain an upper bound on the volume of $\cXs$ as follows. 
\begin{eqnarray}
\text{Vol}(\cXs) \nonumber
&= & \int_{\mathbb{B}^{(d)}_{\tilde{\bm{x}}}(r)}  \mathrm{d}\bm{x} \cdot I_{\cXs}(\bm{x})\nonumber\\
&= & \int_{\mathbb{B}^{(d-1)}_{\tilde{\bm{x}}}(r)} \mathrm{d} \bm{x}^{(-1)} \int_{y_l}^{y_u} \mathrm{d} x^{(1)}  \cdot  I_{\cXs}(\bm{x})\nonumber\\
&\le & \int_{\mathbb{B}^{(d-1)}_{\tilde{\bm{x}}}(r)} \mathrm{d} \bm{x}^{(-1)} \cdot\left(2\cdot \frac{\hat{\delta}}{2\sqrt{d}}r \right) \nonumber\\
&=& \text{Vol}(\mathbb{B}_0^{(d-1)}(r))\times \frac{\hat{\delta} r}{\sqrt{d}},
\end{eqnarray}
where $y_l=\tilde{x}^{(1)} - \sqrt{r^2 - \norm{\tilde{\bm{x}}^{(-1)} - \bm{x}^{(-1)}}^2}$, and $y_u=\tilde{x}^{(1)} + \sqrt{r^2 - \norm{\tilde{\bm{x}}^{(-1)} - \bm{x}^{(-1)}}^2}$.

We next obtain an upper bound on $\frac{\text{Vol}(\cXs)}{\text{Vol}(\mathbb{B}^{(d)}_{\tilde{\bm{x}}}(r))}$ as follows.

\begin{eqnarray}
\frac{\text{Vol}(\cXs)}{\text{Vol}(\mathbb{B}^{(d)}_{\tilde{\bm{x}}}(r))}&
\le& \frac{\frac{\hat{\delta} r}{\sqrt{d}} \times \text{Vol}(\mathbb{B}^{(d-1)}_0(r))}{\text{Vol} (\mathbb{B}^{(d)}_0(r))}\nonumber\\
&=& \frac{\hat{\delta}}{\sqrt{\pi d}}\frac{\Gamma(\frac{d}{2}+1)}{\Gamma(\frac{d}{2}+\frac{1}{2})} \nonumber\\
&\le& \frac{\hat{\delta}}{\sqrt{\pi d}} \cdot \sqrt{\frac{d}{2}+\frac{1}{2}}\nonumber \\
&\le& \hat{\delta}
\end{eqnarray}
The second last inequality is by the Gautschi's inequality \citep{elezovic2000best}, which states that $\frac{\Gamma(x+1)}{\Gamma(x+1/2)}<\sqrt{x+\frac{1}{2}}$ as long as $x\ge 0$.
Due to $\bm{\xi}$ chosen from uniform distribution ball with radius $r$ , therefore, with at least probability $1-\hat{\delta}$, $\bm{x}_0 \not \in \cXs$. Thus, by Lemma \ref{lemma_esc} 
\begin{align*}
f(\bm{x}_T) - f(\tilde{\bm{x}}) =& f(\bm{x}_T)  - f(\bm{x}_0) +  f(\bm{x}_0) - f(\widetilde{\bm{x}}) \\
\le & -2.5f_{thres} + 1.5f_{thres} \le -f_{thres}
\end{align*}
which completes the proof of  Lemma \ref{lemma_leq}.

\bibliographystyle{IEEEtran}


\bibliography{refs}

\if 0

\fi 
\end{document}